\newtheorem{lemma}{Lemma}
\newtheorem{theorem}{Theorem}
\title{ On the Value-Distribution of Hurwitz Zeta-Functions with Algebraic Parameter}
\author{Athanasios Sourmelidis $\cdot$ J\"orn Steuding}
\begin{document}
\date{}
\keywords{Zeta-Functions, Universality, Approximation by Algebraic Numbers}
\maketitle\unmarkedfntext{MSC 2010: 11M35}
\begin{abstract}
\noindent
We study the value-distribution of the Hurwitz zeta-function with algebraic irrational parameter $\zeta(s;\alpha)=\sum_{n\geq_0}(n+\alpha)^{-s}$.
In particular, we prove effective denseness results of the Hurwitz zeta-function and its derivatives in suitable strips containing the right boundary of the critical strip $1+i\mathbb{R}$. 
This may be considered as a first "weak`` manifestation of universality for those zeta-functions.
\end{abstract}
\section{Introduction and Statement of the Main Results}
Let $s=\sigma+it$ denote a complex variable. 
The Hurwitz zeta-function with a real parameter $\alpha\in(0,1]$ is for $\sigma>1$ defined by the Dirichlet series expansion
$$
\zeta(s;\alpha)=\sum_{n=0}^{\infty}(n+\alpha)^{-s},
$$
and by analytic continuation elsewhere except for a simple pole at $s=1$. 
This function was introduced by Hurwitz \cite{hurwitz1882einige} in 1881/2 and generalizes the famous Riemann zeta-function which appears as $\zeta(s)=\zeta(s;1)$. 

The Riemann zeta-function possesses a remarkable approximation property. 
In 1975, Voronin \cite{voronin1975theorem} proved that, roughly speaking, every non-vanishing analytic function $f$ defined on a sufficiently small disk $K$ centered at the origin can be approximated as good as we please by certain shifts of the Riemann zeta-function, 
\begin{align*}
\max\limits_{s\in K}\left\vert \zeta\left(s+\dfrac{3}{4}+i\tau\right)-f(s)\right\vert<\varepsilon\, ;
\end{align*}
furthermore, this approximation is a regular phenomenon: the set of shifts $\tau$ satisfying the above inequality has positive lower density. 
Since a single
 function approximates elements of a huge class of target functions $f$, this property is called {\it universality}. 

Voronin's celebrated universality theorem has been generalized and extended in various ways. 
It has been shown in 1979/81 by Gonek \cite{gonek1979analytic} and (independently) Bagchi \cite{bagchi1981statistical} that the Hurwitz zeta-function $\zeta(s;\alpha)$ satisfies the analogue of Voronin's universality theorem whenever $\alpha$ is rational or transcendental. 
It appears that for every such $\alpha\neq \frac{1}{2},1$, the target function $f$ even may vanish in $K$, for which those $\zeta(s;\alpha)$ are said to be {\it strongly} universal. 
For this and more details we refer besides the original works to \cite{steuding2007value}. 

Ever since the question whether the Hurwitz zeta-function with an algebraic irrational parameter is universal in this or another sense has been investigated, so far only with little success though. 
For instance, Garunk\v stis \cite{garunkvstis2005note} showed by employing the continuity of $\zeta(s;\alpha)$ with respect to $\alpha$ the existence of zeros in the right half of the critical strip except for $\alpha=\frac{1}{2},1$ (which would also follow from universality). 
Also, Laurin\v cikas and the second author \cite{laurinvcikas2005limit} obtained limit theorems for a Hurwitz zeta-function with an algebraic irrational parameter (unfortunately not sufficiently explicit for being used in a hypothetical proof of universality). 
Lastly, Mishou \cite{zbMATH05263981} considered the  joint value distribution of the Riemann zeta-function and the Hurwitz zeta-function with algebraic parameter on the right of the line $1+i\mathbb{R}$.

In this article we study the behaviour of the Hurwitz zeta-function $\zeta(s;\alpha)$ on the left of $1+i\mathbb{R}$, where the parameter $\alpha$ is an algebraic irrational number.
 We incorporate ideas of Voronin \cite{voronin1989omega} and Good \cite{good1980distribution} to obtain quantitative results; an additional feature in our reasoning is the use of the theory of approximation by algebraic numbers.

First, we have to introduce several notations which will be kept throughout the paper.
The naive height of a complex polynomial $P(X)$, denoted by $H(P)$, is the maximum of the absolute values of its coefficients.
 If $\alpha$ is an algebraic number, then its degree and height, which we denote by $d(\alpha)$ and $H(\alpha)$, are defined to be the degree and the height of its minimal polynomial over $\mathbb{Z}$, respectively.  
All the constants appearing in the sequel are effectively computable.
The numbers $R$, $Q$ and $M$ will always denote positive integers,  while $T$, $\sigma$ and $d$ will be positive real numbers. 
Lastly, we postpone the definitions for the number $\mathbf{E}=\mathbf{E}(R,Q,\sigma)$, the set $$\mathcal{A}(Q,M)\subseteq\mathbb{A}:=\left\{a\in[0,1]:\alpha\,\,\text{is algebraic irrational}\right\}$$ and the number $\mathbf{K}=\mathbf{K}(Q,M,\alpha)$ which appear in Theorem \ref{weak tran} until Section \ref{Auxil} (see \eqref{E_def}, \eqref{al} and \eqref{B}, respectively).
Our main result is the following:
\begin{theorem}\label{weak tran}
For every $\sigma\in(1/2,1]$, $N\in\mathbb{N}$, $A\in(0,1]$ and $d\geq3$, there exist positive numbers $c_0$, $c_1$, $c_2$  which depend on $\sigma$ and $N$, $c_3=c_3(N,A)$, $c_4$, $c_5=c_5(N,d)$ and $\nu=\nu(d,N)$, such that the following is true:

Let $\varepsilon>0$ and $\mathbf{a}:=(a_0,\dots,a_n)\in\mathbb{C}^{N+1}$.
Let also 
$$R\geq c_0\,\varepsilon^{4/(1-2\sigma)}$$
 and $Q_0\geq c_1 R$ be
 positive integers satisfying the system of inequalities
\begin{align*}
c_2\left(|a_k|+A^{-1/2}\right)
\leq\mathbf{E}\left(\dfrac{\log\left(\frac{Q_0}{R+1}\right)}{2N\log Q_0}\right)^{N}k!(N-k)!\left(\log Q_0\right)^k,\,\,\,k=0,\dots,N.
\end{align*}
Then, for any $Q\geq c_3\left(Q_0+1/\varepsilon^8\right)$, $M\geq c_4\exp\left(2Q^2\right)$, $\alpha\in\mathcal{A}(Q,M)\cap[A,1]$ of degree $d(\alpha)\leq d-1$, where
\begin{align}\label{degree}
d+\dfrac{1}{2d}
\leq\dfrac{40}{267}\left(\dfrac{1}{3(1-\sigma)}\right)^{1/2}
\end{align}
and the left-hand side of the inequality is $+\infty$ for $\sigma=1$,
and any
\begin{align}\label{BBB}
{T\geq c_5\max\left\{\left(\mathbf{K}\exp\left((M+2)\exp\left(Q^2\right)\right)\right)^{\frac{4d}{4(d-d(\alpha))-3}},\varepsilon^{-2\nu}\right\}},
\end{align}
there is $\tau\in[T,2T]$ with
\begin{align}\label{SYST}
\left|\zeta^{(k)}\left(\sigma+i\tau;\alpha\right)-a_k\right|
<\varepsilon,\,\,\,\,\,\,\,\,\,k=0,\dots ,N.
\end{align}
Moreover, if $\mathcal{M}_T(\alpha,\sigma)$ is the set of those $\tau\in[T,2T]$ for which
\begin{align*}
\left|\zeta^{(k)}\left(\sigma+i\tau;\alpha\right)-a_k\right|
<\left(2\dfrac{Q^2+1}{Q^2-1}\right)^{1/2}\varepsilon,\,\,\,\,\,\,\,\,\,k=0,\dots ,N,
\end{align*}
then
\begin{align*}
\liminf\limits_{T\to\infty}\dfrac{1}{T}\mathrm{m}\left(\mathcal{M}_T(\alpha,\sigma)\right)\geq\dfrac{1}{2
}Q^{-2Q}\left(1-Q^{-2}\right),
\end{align*}
where $\mathrm{m}$ denotes the Lebesgue measure.
\end{theorem}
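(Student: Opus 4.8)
The argument combines a mean-square approximation of the functions $\zeta^{(k)}(\sigma+\cdot\,;\alpha)$ by short Dirichlet polynomials, an \emph{effective} equidistribution statement for the curve $\tau\mapsto\bigl((n+\alpha)^{-i\tau}\bigr)_{0\le n\le R}$ on a finite-dimensional torus (where the algebraicity of $\alpha$ enters, through Baker's theorem on linear forms in logarithms), and a second-moment argument in the style of Good \cite{good1980distribution} and Voronin \cite{voronin1989omega}. The measure statement is the stronger one; the bare existence assertion \eqref{SYST} comes out of the same reasoning by tracking the individual quantities $|\zeta^{(k)}(\sigma+i\tau;\alpha)-a_k|$ rather than their weighted square sum.

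\emph{Truncation.} For $\sigma\in(1/2,1]$ one has, via the approximate functional equation for $\zeta(\cdot\,;\alpha)$ together with the Montgomery--Vaughan mean value theorem, an estimate
\begin{align*}
\frac1T\int_T^{2T}\Bigl|\zeta^{(k)}(\sigma+it;\alpha)-\sum_{n=0}^{R}\frac{(-\log(n+\alpha))^{k}}{(n+\alpha)^{\sigma+it}}\Bigr|^{2}\,dt\ \ll_{\sigma,N}\ R^{1-2\sigma}(\log R)^{2k}+(\text{smaller terms}),
\end{align*}
in which the $R^{1-2\sigma}$ is the mean square of the dual sum; the choice $R\ge c_0\,\varepsilon^{4/(1-2\sigma)}$ thus makes the right-hand side $\ll\varepsilon^{4}$. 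It therefore suffices to work with the Dirichlet polynomials $D_k(t):=\sum_{n\le R}(-\log(n+\alpha))^{k}(n+\alpha)^{-\sigma-it}$, and, for the measure statement, to couple them with a nonnegative weight $W$ on $[T,2T]$; the passage from the $D_k$ back to the $\zeta^{(k)}$ will be paid through a hybrid second moment for $\int_T^{2T}|P(\tau)(\zeta^{(k)}(\sigma+i\tau;\alpha)-D_k(\tau))|^2\,d\tau$ with $W=|P|^2$, and it is this loss that is recorded by the factor $\bigl(2\tfrac{Q^{2}+1}{Q^{2}-1}\bigr)^{1/2}$ in the definition of $\mathcal M_T(\alpha,\sigma)$.

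\emph{Effective equidistribution.} The frequencies of $D_k$ are the numbers $\log(n+\alpha)$, $0\le n\le R$; these are logarithms of \emph{algebraic} numbers, so the validity of a $\mathbb Q$-linear relation $\sum_{n\le R}b_n\log(n+\alpha)=0$ is a question about multiplicative relations among the $n+\alpha$, which do genuinely occur (for instance $\log\alpha+\log(1+\alpha)=0$ when $\alpha=\tfrac{\sqrt5-1}{2}$), so $\mathbb Q$-linear independence must be imposed -- this, together with a quantitative strengthening encoded through $M$, is what the set $\mathcal A(Q,M)$ provides. For every \emph{nonzero} such form, Baker's theorem on linear forms in logarithms furnishes an effective lower bound, uniform in $\max|b_n|$, whose constant depends on $R$, on the degree $d(\alpha)$ and on the height of $\alpha$; this is the origin of $\mathbf K=\mathbf K(Q,M,\alpha)$, of the scales $\exp(Q^2)$ and $\exp((M+2)\exp(Q^2))$ in \eqref{BBB}, and of the requirement $M\ge c_4\exp(2Q^2)$. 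Feeding these lower bounds into the Erd\H{o}s--Tur\'an inequality shows that the curve $\tau\mapsto\bigl((n+\alpha)^{-i\tau}\bigr)_{n\le R}$ equidistributes in the relevant torus with an explicit discrepancy that falls below the volume of the target window (see below) precisely once $T$ exceeds the threshold \eqref{BBB}; the constraint \eqref{degree} on the admissible degrees is exactly the condition that this Baker-type quality be compatible with the precision forced by the mean-square exponent $1-2\sigma$, which explains the exponent $4d/(4(d-d(\alpha))-3)$ there.

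\emph{Reachability and the explicit density.} By the reachability hypothesis $c_2(|a_k|+A^{-1/2})\le\mathbf E\,\bigl(\tfrac{\log(Q_0/(R+1))}{2N\log Q_0}\bigr)^{N} k!(N-k)!(\log Q_0)^{k}$ -- which says that the mass $\mathbf E$ available in suitable blocks of indices below $Q_0$ is enough to represent the prescribed $a_k$ -- one produces coefficients $(y_n)_{n\le R}$ with $|y_n|\le1$ and $\sum_{n\le R}(-\log(n+\alpha))^{k}(n+\alpha)^{-\sigma}y_n=a_k$ for all $k$ (reserving a separate geometric block of indices in $[R+1,Q_0]$ for each $k$ and solving the resulting essentially triangular, Vandermonde-type system). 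Take $W$ to be a product of Fej\'er kernels that selects those $\tau\in[T,2T]$ for which $\bigl((n+\alpha)^{-i\tau}\bigr)_{n\le R}$ lies in a fixed window around $(y_n)_{n\le R}$ of relative side-lengths of order $\varepsilon R^{\sigma-1}(\log R)^{-N}$; since $Q\ge c_3(Q_0+\varepsilon^{-8})$ dominates every fixed power of $R$ and of $1/\varepsilon$ at play, this window has volume $\ge Q^{-2Q}(1-Q^{-2})$, so $W$ may be normalised so that $\tfrac1T\int_T^{2T}W=1$ while $\sup W\le Q^{2Q}/(1-Q^{-2})$. By the effective equidistribution above, for $T$ beyond \eqref{BBB} one has $\tfrac1T\int_T^{2T}W>0$ and, on $\operatorname{supp}W$, $\sum_k|D_k(\tau)-a_k|^2$ is as small as desired; a Montgomery--Vaughan computation then gives $\tfrac1T\int_T^{2T}W(\tau)\sum_k|\zeta^{(k)}(\sigma+i\tau;\alpha)-a_k|^2\,d\tau\le\tfrac{Q^2+1}{Q^2-1}\varepsilon^2$ (the constant produced by calibrating the window radius against the off-diagonal terms, whose frequency gaps are $\gg1/Q$), whence by Chebyshev's inequality the part of the mass of $W$ carried by $\{\tau:\sum_k|\zeta^{(k)}(\sigma+i\tau;\alpha)-a_k|^2\ge2\tfrac{Q^2+1}{Q^2-1}\varepsilon^2\}$ is at most $T/2$. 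Dividing the remaining mass $\ge T/2$ by $\sup W$ yields $\mathrm m(\mathcal M_T(\alpha,\sigma))\ge\tfrac12 Q^{-2Q}(1-Q^{-2})\,T$ for every $T$ exceeding \eqref{BBB}, hence the asserted $\liminf$; the nonemptiness already contained here gives \eqref{SYST}.

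\emph{Main obstacle.} The crux is the effective equidistribution step: one needs Baker's theorem on linear forms in logarithms to turn the (otherwise merely qualitative) denseness of the curve into an explicit recurrence time, and it is this that forces the doubly exponential scales in \eqref{BBB} and the magnitude of $M$, together with the degree restriction \eqref{degree} reconciling the Baker-type quality with the mean-square exponent $1-2\sigma$; one must also check that the short, $T$-independent truncation level $R=R(\varepsilon)$ of the first step stays compatible with these enormous scales, which is precisely why $Q$, $Q_0$ and $M$ are required to dominate fixed powers of $1/\varepsilon$ and of the prescribed values $a_k$.
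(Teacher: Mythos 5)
Your high-level plan (truncate $\zeta^{(k)}$ to short Dirichlet polynomials, force the phases $(n+\alpha)^{-i\tau}$ into a target window via a smooth weight, then run a second-moment/Chebyshev argument) is the right Voronin--Good template and matches the paper's Theorems \ref{MAIN1} and \ref{MAIN3}. However, the key Diophantine ingredient is mischaracterized, and this is not a cosmetic point: the paper does \emph{not} use Baker's theorem on linear forms in logarithms, and it does not use the Erd\H{o}s--Tur\'an inequality. Instead, the nonvanishing of the relevant Fourier coefficients is handled by showing that for $\underline{m}\in\hat M\setminus\{\underline 0\}$ the integer polynomial $P_{\underline m}=Q^+_{\underline m}-Q^-_{\underline m}$ does not vanish at $\alpha$ (this is exactly what the set $\mathcal A(Q,M)$ in \eqref{al} guarantees), and then the quantity $\bigl|\log\bigl(Q^+_{\underline m}(\alpha)/Q^-_{\underline m}(\alpha)\bigr)\bigr|^{-1}$ is bounded via Lemma~\ref{log} by the ratio $\max\{Q^+,Q^-\}/|Q^+-Q^-|$, and the denominator $|P_{\underline m}(\alpha)|$ is bounded \emph{below} by the Liouville-type inequality of G\"uting (Lemmas~\ref{polyn}--\ref{pol}). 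The specific shape of $\mathbf K$ in \eqref{B} --- namely $[\mathbf H(Q,M)(MQ+2)]^{d(\alpha)-1}[H(\alpha)(d(\alpha)+1)^{1/2}]^{MQ+1}$ --- and the exponent $4d/(4(d-d(\alpha))-3)$ in \eqref{BBB} are direct transcriptions of this Liouville bound balanced against the polynomial range $p(2T)=\lfloor (2T)^{1/d}\rfloor$ of the approximate functional equation (Lemma~\ref{approx'}); a Baker-type bound would not reproduce either. Likewise the role of $M$ is to truncate the Fourier expansion of the explicitly constructed bump $\Lambda_Q$ (so $M\gg\exp(2Q^2)$ controls the Fourier tail via \eqref{ineq3}--\eqref{LL}), not a Baker threshold.

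There is a second, smaller mismatch in the reachability step. You describe solving a ``Vandermonde-type'' system by reserving disjoint geometric blocks. The paper instead passes through the Hahn--Banach separation lemma (Lemma~\ref{Hil}) to reduce to a one-dimensional condition \eqref{cond}, and then controls a general degree-$N$ polynomial on a partition of $[\log(R+\alpha),\log Q]$ by Lagrange interpolation together with Markov's inequality (relations \eqref{der}--\eqref{Good1}); this is what produces the exact factor $\mathbf E(R,Q_0,\sigma)\bigl(\tfrac{\log(Q_0/(R+1))}{2N\log Q_0}\bigr)^N k!(N-k)!(\log Q_0)^k$. Your sketch would produce coefficients $(y_n)$, but not with the precise system of inequalities in the hypothesis. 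Finally, the actual lower density $\tfrac12Q^{-2Q}(1-Q^{-2})$ comes from $\delta^Q=(Q^{-2})^Q$ being the value $h_{\underline 0}$ of the zeroth Fourier coefficient of $\Lambda_Q$ together with the error control in \eqref{bas2}; it is not an \emph{a priori} volume estimate of a window. In short: the overall architecture is right, but the specific quantitative inputs (G\"uting's Liouville bound and the explicit Fourier analysis of $\Lambda_Q$) are essential to obtain the statement as given, and the appeal to Baker's theorem would not yield these bounds.
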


Observe that the theorem has meaning only when $\sigma\geq1-\xi$, where
$$\xi:=\dfrac{2^8\cdot 5^2}{3\cdot19^2\cdot89^2}\approx0.000746,$$
as follows from \eqref{degree} for $d=3$.
 As a consequence we obtain an effective but weak form of universality in the same manner as in \cite{garunkvstis2010effective}:

\begin{theorem}\label{weak.}
Let $1-\xi\leq\sigma_0\leq1$, $s_0=\sigma_0+it_0$  and $f\,:\,\mathcal{K}\to\mathbb{C}$ be continuous and analytic in the interior of $\mathcal{K}=\{s\in\mathbb{C}\,:\,\vert s-s_0\vert\leq r\}$, where $r>0$.
 Let also $0<A<1$ and $\varepsilon\in(0,\vert f(s_0)\vert)$. Then, for all but finitely many algebraic 
irrationals $\alpha$
in $[A,1]$ of degree at most
$d_0-1$, where
\begin{align*}
d_0+\dfrac{1}{2d_0}=\dfrac{40}{267}\left(\dfrac{1}{3(1-\sigma_0)}\right)^{1/2},
\end{align*}
 there exist real numbers $\tau\in[T,2T]$ and $\delta=\delta(\varepsilon,f,T)>0$ such that
$$
\max_{|s-s_0\vert\leq \delta r}\vert \zeta(s+i\tau;\alpha)-f(s)\vert<3\varepsilon,
$$
whenever $T=T(\varepsilon,f,\alpha)$ satisfies \eqref{BBB}.
The set of the exceptional $\alpha$ can be described effectively, while the dependence of $T$ on $f$ arises from the first $N$ Taylor coefficients of $f$ for sufficiently large $N$.
\end{theorem}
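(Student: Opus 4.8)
\noindent\textit{Proof strategy.} The plan is to deduce Theorem~\ref{weak.} from Theorem~\ref{weak tran} by a Taylor--Cauchy argument (in the spirit of \cite{garunkvstis2010effective}) which converts the simultaneous prescription of the first $N+1$ derivatives of $\zeta(\,\cdot\,;\alpha)$ at a single point into a uniform approximation of $f$ on a small disc, plus a short bookkeeping step about which $\alpha$ are admissible. I would first fix $\rho\in(0,r)$ and let $p_N(s)=\sum_{k=0}^{N}\tfrac1{k!}f^{(k)}(s_0)(s-s_0)^k$ be the $N$-th Taylor polynomial of $f$ at $s_0$; since $f$ is analytic in the interior of $\mathcal K$ its Taylor series at $s_0$ converges uniformly on $\{|s-s_0|\le\rho\}$, so there is $N=N(\varepsilon,f,\rho)$ with $\max_{|s-s_0|\le\rho}|f(s)-p_N(s)|<\varepsilon$ (this is the source of the stated dependence of $N$, and hence of $T$, on $f$). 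Next I would record that $\sigma_0\ge1-\xi$ forces $\tfrac{40}{267}\bigl(3(1-\sigma_0)\bigr)^{-1/2}\ge3+\tfrac16$, which is the value of $x+\tfrac1{2x}$ at $x=3$; since this function is increasing for $x\ge1$, the root $d_0$ of the defining equation for $d_0$ satisfies $d_0\ge3$, so $d:=\lfloor d_0\rfloor$ (and for $\sigma_0=1$, where $d_0=\infty$, any integer $d\ge3$, chosen per $\alpha$ below) is an integer $\ge3$ fulfilling \eqref{degree} with $\sigma=\sigma_0$, and ``degree at most $d_0-1$'' means exactly $d(\alpha)\le d-1$. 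Finally I would pick $r_1\in\bigl(0,\min\{\rho,\sigma_0-\tfrac12\}\bigr)$ and set $\varepsilon^{\ast}:=\varepsilon\,e^{-r_1}\in(0,\varepsilon)$.

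I would then apply Theorem~\ref{weak tran} with this $N$, with $\sigma=\sigma_0$, the same $A$, the above $d$, with $\varepsilon$ replaced by $\varepsilon^{\ast}$, and with $\mathbf a=(f(s_0),f'(s_0),\dots,f^{(N)}(s_0))$. This fixes the effective constants and an admissible pair $R,Q_0$ (the system of inequalities involves only $A$ and the numbers $|f^{(k)}(s_0)|$, and its right-hand sides grow with $Q_0$), and, on choosing $Q=\lceil c_3(Q_0+(\varepsilon^{\ast})^{-8})\rceil$ and $M=\lceil c_4\exp(2Q^2)\rceil$, it yields for every algebraic irrational $\alpha\in\mathcal A(Q,M)\cap[A,1]$ with $d(\alpha)\le d-1$ and every $T$ satisfying \eqref{BBB} (with $\varepsilon^{\ast}$ for $\varepsilon$) a number $\tau_0\in[T,2T]$ with $|\zeta^{(k)}(\sigma_0+i\tau_0;\alpha)-f^{(k)}(s_0)|<\varepsilon^{\ast}$ for $k=0,\dots,N$. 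I would put $g(s):=\zeta\bigl(s+i(\tau_0-t_0);\alpha\bigr)$; this is analytic on $\{|s-s_0|\le r_1\}$ because the only singularity of $\zeta(\,\cdot\,;\alpha)$ lies at a point of imaginary part $0$, whereas on this disc the imaginary parts are $\approx\tau_0\gg0$, and the real parts stay in $(\tfrac12,\infty)$. Since $g^{(k)}(s_0)=\zeta^{(k)}(\sigma_0+i\tau_0;\alpha)$ we have $|g^{(k)}(s_0)-f^{(k)}(s_0)|<\varepsilon^{\ast}$ for $k\le N$, and writing $\tau:=\tau_0-t_0$ — which lies in $[T,2T]$ after adjusting $T$ by the fixed constant $|t_0|$, harmless since \eqref{BBB} forces $T$ enormous — we have $g(s)=\zeta(s+i\tau;\alpha)$.

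To shrink the disc I would use Cauchy's estimates, $|g^{(k)}(s_0)|\le k!\,M_1 r_1^{-k}$ with $M_1:=\max_{|s-s_0|=r_1}|g(s)|$; on that circle $\mathrm{Re}(s+i\tau)\ge\sigma_0-r_1>\tfrac12$ and $|\mathrm{Im}(s+i\tau)|\le|t_0|+r_1+2T$, so the elementary growth bounds for $\zeta(\,\cdot\,;\alpha)$ on $\mathrm{Re}>\tfrac12$ (to be recorded in Section~\ref{Auxil}) give $M_1\ll(2T)^{\kappa}$ with $\kappa=\kappa(\sigma_0,r_1)$. For $|s-s_0|\le\delta_1<\tfrac12 r_1$ I would split
\begin{align*}
g(s)-p_N(s)
&=\sum_{k=0}^{N}\frac{g^{(k)}(s_0)-f^{(k)}(s_0)}{k!}(s-s_0)^k\\
&\qquad+\sum_{k>N}\frac{g^{(k)}(s_0)}{k!}(s-s_0)^k ;
\end{align*}
the first sum is $<\varepsilon^{\ast}e^{r_1}=\varepsilon$, and the second is at most $M_1(\delta_1/r_1)^{N+1}\bigl(1-\delta_1/r_1\bigr)^{-1}$, which is $<\varepsilon$ once $\delta_1$ is small enough in terms of $\varepsilon,N,r_1,M_1$. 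As $\delta_1\le\rho$ we also have $|p_N(s)-f(s)|<\varepsilon$ there, hence $|\zeta(s+i\tau;\alpha)-f(s)|<3\varepsilon$ for $|s-s_0|\le\delta_1$; setting $\delta:=\delta_1/r$ gives the asserted inequality on $|s-s_0|\le\delta r$ with $\delta=\delta(\varepsilon,f,T)$, the $T$-dependence entering only through $M_1$. For the ``all but finitely many $\alpha$'' clause the pair $Q,M$ is now fixed once $\sigma_0,A,\varepsilon,f$ are, and by the definition \eqref{al} of $\mathcal A(Q,M)$ its complement among the algebraic irrationals of degree $\le d-1$ in $[A,1]$ consists only of numbers of effectively bounded height, hence is finite and can be listed explicitly; these are the exceptional $\alpha$.

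I expect the genuinely delicate points to be two. First, the admissible radius $\delta r$ \emph{must} be permitted to shrink with $T$, because $M_1$ grows polynomially in $\tau\le2T$ — this is precisely why the theorem records $\delta=\delta(\varepsilon,f,T)$ — so one must check that no earlier step secretly requires a $T$-uniform radius. Second, one has to extract from the definition \eqref{al} the two properties used above: that $\mathcal A(Q,M)$ omits only finitely many algebraic irrationals of bounded degree, and that this omitted set is effectively describable; this is where the Diophantine (Liouville-type) information on an algebraic irrational $\alpha$ enters, and making that clean is the real content of the last step. The hypothesis $\varepsilon<|f(s_0)|$ is not needed for the approximation itself; following \cite{garunkvstis2010effective} it is kept so that, after a further shrinking of $\delta$, the shift $\zeta(\,\cdot\,+i\tau;\alpha)$ is zero-free on the small disc.
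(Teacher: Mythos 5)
Your argument is correct and follows the same Taylor–Cauchy decomposition as the paper: approximate $f$ by its $N$-th Taylor polynomial at $s_0$, invoke Theorem~\ref{weak tran} to produce a shift $\tau$ where the first $N+1$ derivatives of $\zeta(\,\cdot\,;\alpha)$ at $\sigma_0+i\tau$ are $\varepsilon$-close to those of $f$ at $s_0$, Taylor-expand the shifted $\zeta$ about $s_0$, and control the three resulting pieces on a disc of radius shrinking with $T$. The only (cosmetic) deviation is in the bookkeeping of the final radius: you feed a slightly smaller $\varepsilon^*=\varepsilon e^{-r_1}$ into Theorem~\ref{weak tran} and then pick $\delta_1$ small enough to make each of the three terms $<\varepsilon$ directly, whereas the paper applies Theorem~\ref{weak tran} with $\varepsilon$ itself, obtains a middle term bounded by $\varepsilon\exp(\delta r)$, and determines $\delta$ by an intermediate-value argument balancing the tail $M(\tau)\delta^{N}/(1-\delta)$ against $\varepsilon(2-\exp(\delta r))$ — both variants produce the same $3\varepsilon$.
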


The restriction on the strip of universality reminds us of the case of the Dedekind zeta-function $\zeta_K$, where $K$ is an algebraic number field over $\mathbb{Q}$. Reich \cite{zbMATH03590361}, \cite{zbMATH03670533} proved that $\zeta_K$ is universal in the sense of Voronin in the strip $\max\left\{1/2,1-1/d\right\}$, where $d=[K:\mathbb{Q}]$ is the degree of the number field.

In the following section we list several well-known results that will turn out useful for our proofs, in particular an approximate functional equation for $\zeta(s;\alpha)$ and a Liouville type inequality. 
In the succeeding two sections we provide the proofs of the results mentioned above and we conclude with a few remarks which might be of interest with respect to further studies of this topic.

\section{Preliminaries}

\begin{lemma}\label{Hilbert}
Let $x_1,\dots,x_n$ be elements of a complex Hilbert space $\mathcal{H}$ and let $a_1,\dots,a_n$ be complex numbers with $|a_j|\leq1$ for $1\leq j\leq n$.
Then there exist complex numbers $b_1,\dots,b_n$ with $|b_j|=1$ for $1\leq j\leq n$, satisfying the inequality
\begin{align*}
\left|\left|\mathlarger\sum\limits_{j=1}^{n}a_jx_j-\mathlarger\sum\limits_{j=1}^{n}b_jx_j\right|\right|^2\leq 4\mathlarger\sum\limits_{j=1}^{n}||x_j||^2
\end{align*}
\end{lemma}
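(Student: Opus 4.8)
The plan is to build the numbers $b_1,\dots,b_n$ one at a time by a greedy procedure, keeping the accumulated error under control after each step. Set $y_0:=0$ and, having already chosen $b_1,\dots,b_{m-1}$, put $y_{m-1}:=\sum_{j=1}^{m-1}(a_j-b_j)x_j$. The heart of the matter is the following one-step estimate: for every $v\in\mathcal H$, every $x\in\mathcal H$ and every $a\in\mathbb C$ with $|a|\le1$ there is a $b\in\mathbb C$ with $|b|=1$ such that
$$\|v+(a-b)x\|^2\le\|v\|^2+4\|x\|^2.$$
Granting this, one applies it with $v=y_{m-1}$, $x=x_m$, $a=a_m$ to obtain $b_m$ with $\|y_m\|^2\le\|y_{m-1}\|^2+4\|x_m\|^2$; telescoping over $m=1,\dots,n$ then yields $\|y_n\|^2\le4\sum_{j=1}^n\|x_j\|^2$, which is exactly the asserted inequality since $y_n=\sum_{j=1}^n a_jx_j-\sum_{j=1}^n b_jx_j$.

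To prove the one-step estimate, expand the norm as
$$\|v+(a-b)x\|^2=\|v\|^2+2\,\mathrm{Re}\big(\lambda(a-b)\big)+|a-b|^2\|x\|^2,$$
where $\lambda=\lambda(v,x)\in\mathbb C$ is the cross term (a scalar multiple of $\langle x,v\rangle$, in either inner-product convention) and does not depend on $b$. Since $|a|\le1$ and $|b|=1$ we have $|a-b|^2\le(|a|+|b|)^2\le4$, so the last term is at most $4\|x\|^2$, and it suffices to choose $b$ on the unit circle making the cross term non-positive, i.e. with $\mathrm{Re}(\lambda a)\le\mathrm{Re}(\lambda b)$. This is always possible: if $\lambda=0$ any $b$ with $|b|=1$ works, while if $\lambda\ne0$ then $b:=\overline\lambda/|\lambda|$ gives $\lambda b=|\lambda|$, hence $\mathrm{Re}(\lambda b)=|\lambda|\ge|\lambda a|\ge\mathrm{Re}(\lambda a)$.

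There is no real obstacle here; this is a standard rearrangement-type lemma. The only point worth flagging is that the bound $|a-b|^2\le4$ is deliberately wasteful but is precisely what produces the clean constant $4$, and the recursion closes because the potentially harmful contribution — the cross term — can always be steered to be non-positive by an appropriate choice of the argument of $b$. The same argument works verbatim over a real Hilbert space with the $b_j$ taken in $\{-1,1\}$ (choosing $b=\operatorname{sign}\lambda$), which is the form in which the lemma is typically applied.
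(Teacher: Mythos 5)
Your proof is correct, and the greedy/inductive argument you give — choosing each $b_m$ so that the cross term in $\|y_{m-1}+(a_m-b_m)x_m\|^2$ is non-positive and bounding $|a_m-b_m|^2\le 4$ — is essentially the standard proof of this rearrangement lemma. The paper itself does not prove the statement but simply cites Lemma~5.2 of Steuding's \emph{Value-Distribution of $L$-Functions}, and your argument reproduces the proof found there.
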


\begin{proof}
For a proof see \cite[Lemma 5.2]{steuding2007value}.
\end{proof}

\begin{lemma}\label{Hil}
Let $X$ be a locally convex vector space. 
Let $K\subseteq X$ be a closed convex set, and suppose that $z\in X\setminus K$. 
Then there exists a continuous linear functional $\ell\in X^*$ and a constant $c\in\mathbb{R}$ such that $\ell(y)\leq c<\ell(x)$ for all $y\in K$.
\end{lemma}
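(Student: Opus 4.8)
The statement is the standard geometric (separation) form of the Hahn--Banach theorem, so the plan is to deduce it from the analytic form together with a Minkowski gauge argument. (I read the displayed right-hand inequality as $\ell(y)\le c<\ell(z)$; the ``$\ell(x)$'' must be a typo, no $x$ having been introduced. We may also assume $K\neq\emptyset$, the empty case being vacuous.) First I would use that $K$ is closed, $z\notin K$, and $X$ is locally convex to produce a convex \emph{open} neighbourhood $U$ of $0$ with $(z+U)\cap K=\emptyset$: the convex neighbourhoods of $0$ form a base of the neighbourhood filter, so one such lies inside the open set $X\setminus(K-z)$. Write $A:=z+U$, an open convex set disjoint from $K$.

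Next, fix $k_0\in K$, set $x_0:=k_0-z$, and form
\[
C:=A-K+x_0=\{\,a-k+x_0:\ a\in A,\ k\in K\,\}.
\]
Then $C$ is convex (a translate of a sum of convex sets), open (the union over $k\in K$ of the open translates $A-k+x_0$), contains $0$ (take $a=z$, $k=k_0$), and omits $x_0$ (otherwise $a=k$ for some $a\in A$, $k\in K$, contradicting $A\cap K=\emptyset$). Let $p=p_C$ be the Minkowski gauge $p(x)=\inf\{t>0:t^{-1}x\in C\}$; since $C$ is convex and absorbing, $p$ is sublinear, and since $C$ is open, $C=\{x:p(x)<1\}$, so $p(x_0)\ge1$. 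Define $f$ on the line $\mathbb{R}x_0$ by $f(tx_0)=t$; then $f\le p$ there (trivially for $t<0$, and $f(tx_0)=t\le t\,p(x_0)=p(tx_0)$ for $t\ge0$). By the analytic Hahn--Banach theorem I extend $f$ to a linear functional $\ell_0:X\to\mathbb{R}$ with $\ell_0\le p$ everywhere; because $p\le1$ on the $0$-neighbourhood $C\cap(-C)$, $\ell_0$ is bounded near $0$ and hence continuous, i.e. $\ell_0\in X^{*}$. (If $X$ is complex one replaces $\ell_0$ by $x\mapsto\ell_0(x)-i\ell_0(ix)$; the separating inequality then concerns $\operatorname{Re}\ell$.)

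It remains to read off the inequality. For $a\in A$, $k\in K$ we have $a-k+x_0\in C$, hence
\[
\ell_0(a)-\ell_0(k)+\ell_0(x_0)=\ell_0(a-k+x_0)\le p(a-k+x_0)<1=f(x_0)=\ell_0(x_0),
\]
so $\ell_0(a)<\ell_0(k)$, and therefore $\gamma:=\inf_{k\in K}\ell_0(k)\ge\sup_{a\in A}\ell_0(a)$. Since $\ell_0(x_0)\ge1>0$, $\ell_0$ is nonzero, so it is onto $\mathbb{R}$ and attains a strictly positive value on the neighbourhood $U$; consequently $\ell_0(z)<\ell_0(z)+\sup_{u\in U}\ell_0(u)=\sup_{a\in A}\ell_0(a)\le\gamma\le\ell_0(k)$ for every $k\in K$. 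Putting $\ell:=-\ell_0\in X^{*}$ and $c:=-\gamma\in\mathbb{R}$ yields $\ell(y)\le c<\ell(z)$ for all $y\in K$, which is the assertion.

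The bookkeeping points — constructing $C$ and checking that its gauge is a finite sublinear functional that is bounded near $0$ — are routine; the only genuinely non-elementary ingredient, and the real ``obstacle'', is the analytic Hahn--Banach extension, which rests on Zorn's lemma. Since the lemma enters the paper only as a tool, it would also be legitimate to omit the argument and cite a standard reference for the separation theorem.
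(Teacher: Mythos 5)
Your proof is correct. The paper does not prove this lemma at all; it simply refers to Einsiedler--Ward, Theorem~8.73, since the statement is the standard Hahn--Banach separation theorem and is used purely as a black box. Your argument is the textbook derivation from the analytic Hahn--Banach theorem: pick a convex open $0$-neighbourhood $U$ with $(z+U)\cap K=\varnothing$, form the Minkowski gauge $p$ of the open convex set $C=A-K+x_0$ (with $A=z+U$, $x_0=k_0-z$), extend the functional $t x_0\mapsto t$ dominated by $p$, and translate back. All the steps check out, including your reading of the typo ``$\ell(x)$'' as ``$\ell(z)$'' and your dispatch of the vacuous case $K=\varnothing$. One tiny imprecision in the write-up: the boundedness of $\ell_0$ near $0$ comes from $\ell_0\le p<1$ on $C$ together with $\ell_0(x)=-\ell_0(-x)>-1$ for $x\in -C$, giving $|\ell_0|<1$ on the symmetric neighbourhood $C\cap(-C)$; it is not that $p$ itself is bounded by $1$ on $C\cap(-C)$. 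This does not affect the conclusion. As you note yourself, since the lemma only serves as an imported tool here, citing a standard reference --- as the paper does --- is the more economical choice, but your self-contained proof is accurate.
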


\begin{proof}
For a proof see \cite[Theorem 8.73]{einsiedler2017functional}.
\end{proof}

\begin{lemma}\label{log}
If $x\neq y$ are positive real numbers, then 
\begin{align*}
\left|\log\dfrac{x}{y}\right|^{-1}<\dfrac{\max\lbrace x,y\rbrace}{|x-y|}.
\end{align*}
\end{lemma}

\begin{proof}
We give shortly the proof.
 Assume without loss of generality that $x>y$. 
 Then,
\begin{align*}
\log\dfrac{x}{y}
=-\log\dfrac{x+y-x}{x}=-\log\left(1-\dfrac{x-y}{x}\right)
=\mathlarger\sum\limits\limits_{n=1}^{\infty}\dfrac{1}{n}\left(\dfrac{x-y}{x}\right)^n
>\dfrac{x-y}{x}
\end{align*}
and the assertion of the lemma follows.
\end{proof}

\begin{lemma}\label{stand1}
For $T>0$ and $0<\sigma\neq1$
\begin{align*}
\mathlarger\sum\limits_{n\leq T}\dfrac{1}{n}
=\log T+\gamma+O\left(T^{-1}\right)
\,\,\,\text{ and }\,\,\,\,
\mathlarger\sum\limits_{n\leq T}\dfrac{1}{n^\sigma}
=\dfrac{T^{1-\sigma}}{1-\sigma}+\zeta(\sigma)+O\left(T^{-\sigma}\right),
\end{align*}
where $\gamma$ is the Euler-Mascheroni constant.
\end{lemma}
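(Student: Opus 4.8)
The plan is to derive both asymptotics from Abel (partial) summation, which replaces each finite sum by an elementary integral plus a fractional-part correction; that correction simultaneously produces the additive constant and, being absolutely convergent at infinity, controls the error term.

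\emph{The harmonic sum.} Put $N=\lfloor T\rfloor$. Writing the counting function as $\lfloor x\rfloor=x-\{x\}$ and applying partial summation to $\sum_{n\le N}n^{-1}$ gives
\begin{align*}
\sum_{n\le N}\frac{1}{n}=\lfloor N\rfloor N^{-1}+\int_1^{N}\frac{\lfloor x\rfloor}{x^{2}}\,dx=1+\log N-\int_1^{N}\frac{\{x\}}{x^{2}}\,dx=\log N+\gamma+\int_{N}^{\infty}\frac{\{x\}}{x^{2}}\,dx,
\end{align*}
where in the last equality I use the classical representation $\gamma=1-\int_1^\infty\{x\}x^{-2}\,dx$ of the Euler--Mascheroni constant (the integral converges because $0\le\{x\}<1$). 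The tail integral is $\le\int_N^\infty x^{-2}\,dx=N^{-1}$, so $\sum_{n\le N}n^{-1}=\log N+\gamma+O(1/N)$. One then passes from $N=\lfloor T\rfloor$ to $T$ using $\log T-\log\lfloor T\rfloor=\log(1+\{T\}/\lfloor T\rfloor)=O(1/T)$ for $T\ge1$; the range $0<T<1$ (empty sum) is trivial since there $1/T>1$ already dominates $|\log T+\gamma|$.

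\emph{The sum with exponent $\sigma$.} The same computation with $\phi(x)=x^{-\sigma}$ gives, for $N=\lfloor T\rfloor$,
\begin{align*}
\sum_{n\le N}\frac{1}{n^{\sigma}}=N^{1-\sigma}+\sigma\int_1^N\frac{\lfloor x\rfloor}{x^{\sigma+1}}\,dx=\frac{N^{1-\sigma}}{1-\sigma}-\frac{\sigma}{1-\sigma}-\sigma\int_1^N\frac{\{x\}}{x^{\sigma+1}}\,dx.
\end{align*}
To identify the constant I invoke the standard formula $\zeta(\sigma)=\sigma/(\sigma-1)-\sigma\int_1^\infty\{x\}x^{-\sigma-1}\,dx$, valid for real $\sigma>0$, $\sigma\ne1$ (this is the analytic continuation of $\zeta$; alternatively, when $\sigma>1$ one may simply let $N\to\infty$ in the previous display). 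Subtracting, one obtains
\begin{align*}
\sum_{n\le N}\frac{1}{n^{\sigma}}=\frac{N^{1-\sigma}}{1-\sigma}+\zeta(\sigma)+\sigma\int_{N}^{\infty}\frac{\{x\}}{x^{\sigma+1}}\,dx,
\end{align*}
whose tail is $\le\sigma\int_N^\infty x^{-\sigma-1}\,dx=N^{-\sigma}$. Finally the mean value theorem gives $|T^{1-\sigma}-\lfloor T\rfloor^{1-\sigma}|\le|1-\sigma|\,\lfloor T\rfloor^{-\sigma}$, hence $T^{1-\sigma}/(1-\sigma)=\lfloor T\rfloor^{1-\sigma}/(1-\sigma)+O(T^{-\sigma})$, and again the range $0<T<1$ is absorbed into the error. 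This proves the second claim.

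I do not expect a genuine obstacle: this is essentially a textbook estimate. The two points that deserve a sentence of care are (i) pinning down the additive constants as exactly $\gamma$ and $\zeta(\sigma)$, which is precisely where convergence of the tail integrals --- respectively the analytic continuation of $\zeta(s)$ across $\sigma=1$ --- enters; and (ii) the passage from $\lfloor T\rfloor$ to $T$ together with the degenerate range $0<T<1$. One should also keep in mind that the implied constants are permitted to depend on $\sigma$, which matters only as $\sigma\to1$.
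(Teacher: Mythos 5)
Your proof is correct and complete. The paper itself does not prove this lemma but merely cites it from Apostol's textbook (Theorem 3.2 of \emph{Introduction to Analytic Number Theory}), where the argument is precisely the Abel/Euler summation computation you give; so your blind proof reconstructs the same standard route. All the details check out: the partial-summation identity, the use of $\gamma=1-\int_1^\infty\{x\}x^{-2}\,dx$ and $\zeta(\sigma)=\sigma/(\sigma-1)-\sigma\int_1^\infty\{x\}x^{-\sigma-1}\,dx$ to identify the constants, the tail bounds $\int_N^\infty x^{-2}\,dx=N^{-1}$ and $\sigma\int_N^\infty x^{-\sigma-1}\,dx=N^{-\sigma}$, the passage from $\lfloor T\rfloor$ to $T$ via $\log(1+\{T\}/\lfloor T\rfloor)=O(1/T)$ and the mean value theorem, and the trivial treatment of $0<T<1$. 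Your closing observation that the implied constants are allowed to (and do) depend on $\sigma$, blowing up as $\sigma\to1$, is also pertinent: this is exactly why Lemma~\ref{stand} carries the subscript $\sigma_0$ and restricts to $\sigma\le\sigma_0<1$.
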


\begin{proof}
For a proof see \cite[Theorem 3.2]{zbMATH03523640}.
\end{proof}

\begin{lemma}\label{stand}
For $0<\alpha\leq1$, $1/2<\sigma\leq\sigma_0<1$ and $j=0,1$, we have 
\begin{align*}
\mathlarger\sum\limits_{1\leq m\neq n\leq T}\dfrac{1}{(m+\alpha)^\sigma(n+\alpha)^{\sigma}}\left|\log\dfrac{n+\alpha}{m+\alpha}\right|^{-j}
\ll_{\sigma_0} T^{2-2\sigma}\left(\log T\right)^j.
\end{align*} 
\end{lemma}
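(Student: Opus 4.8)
The plan is to estimate the double sum by splitting off the "near-diagonal" terms, where $m$ and $n$ are close, from the "far" terms, where $|m-n|$ is comparable to $\max\{m,n\}$. For the case $j=0$ there is nothing subtle: by symmetry we may bound the sum over $m\neq n$ by twice the sum over all $1\leq m,n\leq T$, which factorises as $\bigl(\sum_{n\leq T}(n+\alpha)^{-\sigma}\bigr)^{2}$, and Lemma \ref{stand1} (with $0<\alpha\le 1$ absorbed into the implied constant, since $(n+\alpha)^{-\sigma}\le n^{-\sigma}$) gives $\sum_{n\leq T}(n+\alpha)^{-\sigma}\ll_{\sigma_0} T^{1-\sigma}$ uniformly for $\sigma\le\sigma_0<1$; squaring yields the claimed $T^{2-2\sigma}$. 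So the real content is the weighted case $j=1$.

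For $j=1$, assume by symmetry $n>m$ and write $n=m+h$ with $1\le h\le T$. The logarithmic factor is $\log\frac{n+\alpha}{m+\alpha}=\log\bigl(1+\frac{h}{m+\alpha}\bigr)$. I would distinguish two regimes. When $h\le m$ (say), the ratio $\frac{n+\alpha}{m+\alpha}\in(1,2]$ is bounded, and Lemma \ref{log} gives $\bigl|\log\frac{n+\alpha}{m+\alpha}\bigr|^{-1}<\frac{n+\alpha}{n-m}=\frac{m+h+\alpha}{h}\ll \frac{m}{h}$. Hence the contribution of these terms is
\begin{align*}
\ll \sum_{m\le T}\frac{1}{(m+\alpha)^{\sigma}}\sum_{1\le h\le m}\frac{1}{(m+h+\alpha)^{\sigma}}\cdot\frac{m}{h}
\ll \sum_{m\le T}\frac{m^{1-2\sigma}}{1}\sum_{1\le h\le m}\frac{1}{h}
\ll \sum_{m\le T} m^{1-2\sigma}\log m,
\end{align*}
and another application of (partial summation together with) Lemma \ref{stand1} — or a direct comparison with an integral — bounds this by $\ll_{\sigma_0} T^{2-2\sigma}\log T$, using $1-2\sigma>-1$. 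When $h>m$, the factor $\log\frac{n+\alpha}{m+\alpha}$ exceeds $\log 2$, so the weight is harmless and the sum reduces to the $j=0$ estimate restricted to a subrange, which is again $\ll T^{2-2\sigma}$.

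The one point that needs a little care — and is the main (modest) obstacle — is the uniformity in $\sigma$ near $1/2$ and the behaviour of the sums $\sum_{h\le m}h^{-1}$ and $\sum_{m\le T}m^{1-2\sigma}\log m$: the implied constants must depend only on $\sigma_0$ and not blow up as $\sigma\downarrow 1/2$. This is fine because $1-2\sigma\in[1-2\sigma_0,0)$ stays in a compact set bounded away from $-1$, so $\sum_{m\le T}m^{1-2\sigma}\log m\asymp \frac{T^{2-2\sigma}\log T}{2-2\sigma}\ll_{\sigma_0} T^{2-2\sigma}\log T$ with a constant depending only on $\sigma_0$; and the extra $\log m\le \log T$ accounts for exactly the single power of $\log T$ appearing on the right-hand side. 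Collecting the two regimes and the factor $2$ from symmetry completes the proof for $j=1$, and the $j=0$ case was handled above. (The statement for "$j=0,1$" is thus covered by taking the worse of the two bounds, i.e. $T^{2-2\sigma}(\log T)^{j}$.)
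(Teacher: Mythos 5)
Your proof is correct and follows essentially the same strategy as the paper: split the $j=1$ sum into a near-diagonal regime (your $h\le m$, the paper's $n/2\le m<n$) where Lemma \ref{log} supplies the $\log$-reciprocal bound and a sum over gaps produces the extra $\log T$, and a far regime (your $h>m$, the paper's $m<n/2$) where the logarithm is bounded below and the estimate reduces to the $j=0$ case. The two decompositions are the same change of variables, and the paper also applies Lemma \ref{log} uniformly before splitting rather than only on the near range, but this is a cosmetic difference.
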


\begin{proof}
If $j=0$, then
\begin{align*}
\mathlarger\sum\limits_{1\leq m\neq n\leq T}\dfrac{1}{(m+\alpha)^\sigma(n+\alpha)^{\sigma}}
<\left(\mathlarger\sum\limits_{n\leq T}\dfrac{1}{n^\sigma}\right)^2
\ll_{\sigma_0}T^{2-2\sigma}.
\end{align*}
If $j=1$, then by Lemma \ref{log} we obtain that
\begin{align*}
\mathlarger\sum\limits_{1\leq m\neq n\leq T}\dfrac{1}{(m+\alpha)^\sigma(n+\alpha)^{\sigma}}\left|\log\dfrac{n+\alpha}{m+\alpha}\right|^{-1}
<4\mathlarger\sum\limits_{1\leq m< n\leq T}\dfrac{1}{m^\sigma n^{\sigma}}\dfrac{n}{n-m}.
\end{align*}
We split the sum on the right hand side of the latter inequality according to the cases $m<n/2$ and $n/2\leq m<n$.
We use Lemma \ref{stand1} to estimate the new sums. 
In the first case we have that
\begin{align*}
\mathlarger\sum\limits_{1< n\leq T}\,\mathlarger\sum\limits_{ m< \frac{n}{2}}\dfrac{1}{m^\sigma n^{\sigma}}\dfrac{n}{n-m}
\leq2\mathlarger\sum\limits_{1<n\leq T}\,\mathlarger\sum\limits_{ m< \frac{n}{2}}\dfrac{1}{m^\sigma n^{\sigma}}
<2\left(\mathlarger\sum\limits_{n\leq T}\dfrac{1}{n^\sigma}\right)^2
\ll_{\sigma_0}T^{2-2\sigma},
\end{align*}
while in the second case we set $m=n-r$ and we get that
\begin{align*}
\mathlarger\sum\limits_{1< n\leq T}\,\mathlarger\sum\limits_{ \frac{n}{2}\leq m<n}\dfrac{1}{m^\sigma n^{\sigma}}\dfrac{n}{n-m}
&<\mathlarger\sum\limits_{1<n\leq T}\,\mathlarger\sum\limits_{r\leq\frac{n}{2}}\dfrac{1}{(n-r)^\sigma n^{\sigma}}\dfrac{n}{r}\\
&\leq2\mathlarger\sum\limits_{n\leq T}\dfrac{1}{n^{2\sigma-1}}\mathlarger\sum\limits_{r\leq T}\dfrac{1}{r}\\
&\ll_{\sigma_0}T^{2-2\sigma}\log T.
\end{align*}
\end{proof}
We now present two lemmas regarding the order of the Hurwitz zeta-function in sufficiently {\it{narrow}} strips containing the vertical line $1+i\mathbb{R}$.

\begin{lemma}\label{sapprox}
If $0<\varepsilon<1$ then
\begin{align*}
\zeta_1(s;\alpha):=\zeta(s;\alpha)-\alpha^{-s}\ll_\varepsilon|t|^\varepsilon,\,\,\,\,\,|t|\geq 2,
\end{align*}
uniformly for $1-\varepsilon\leq\sigma\leq3$ and $0<\alpha\leq1$.
\end{lemma}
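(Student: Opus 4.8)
The plan is to reduce the estimate to partial sums of the Dirichlet series via first‑order Euler--Maclaurin summation, and then to optimise the truncation point by choosing it proportional to $|t|$. For $\sigma>1$ one has $\zeta_1(s;\alpha)=\sum_{n\ge1}(n+\alpha)^{-s}$. Fixing a positive integer $N$, separating off the first $N$ terms and applying Euler--Maclaurin to the tail $\sum_{n>N}(n+\alpha)^{-s}$ (with $f(u)=(u+\alpha)^{-s}$, so $f'(u)=-s(u+\alpha)^{-s-1}$) yields
\[
\zeta_1(s;\alpha)=\sum_{n=1}^{N}(n+\alpha)^{-s}
+\frac{(N+\alpha)^{1-s}}{s-1}
-\frac{(N+\alpha)^{-s}}{2}
-s\int_{N}^{\infty}\frac{\{u\}-\tfrac12}{(u+\alpha)^{s+1}}\,\mathrm{d}u .
\]
The integral converges absolutely and locally uniformly for $\sigma>0$, and the remaining terms are meromorphic in $s$ with a single (simple) pole at $s=1$; since $\zeta_1(s;\alpha)=\zeta(s;\alpha)-\alpha^{-s}$ is holomorphic on $\{\sigma>0\}\setminus\{1\}$, the identity persists there by analytic continuation, in particular on the region $1-\varepsilon\le\sigma\le3$, $|t|\ge2$ that we care about.

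Next I would bound the four terms on that region, uniformly in $\alpha\in(0,1]$. Since $n+\alpha>n$ and $\sigma\ge1-\varepsilon$, the finite sum is at most $\sum_{n\le N}n^{-(1-\varepsilon)}\ll_{\varepsilon}N^{\varepsilon}$ (by Lemma \ref{stand1}, noting $1-\varepsilon\in(0,1)$). Because $1-\sigma\le\varepsilon$, $N+\alpha\le N+1$ and $|s-1|\ge|t|$, the second term has modulus $\le(N+1)^{\varepsilon}/|t|\ll_{\varepsilon}N^{\varepsilon}$, and the third is $O(N^{\varepsilon-1})$. Finally, using $|\{u\}-\tfrac12|\le\tfrac12$, $|s|\ll|t|$ (valid since $|t|\ge2$ and $\sigma\le3$) and $\sigma\ge1-\varepsilon$, the last term is bounded by $\tfrac{|s|}{2}\int_{N}^{\infty}(u+\alpha)^{-\sigma-1}\,\mathrm{d}u=\tfrac{|s|}{2\sigma}(N+\alpha)^{-\sigma}\ll_{\varepsilon}|t|\,N^{\varepsilon-1}$. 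Altogether $\zeta_1(s;\alpha)\ll_{\varepsilon}N^{\varepsilon}+|t|\,N^{\varepsilon-1}$.

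To conclude I would take $N=\lfloor|t|\rfloor$, which lies in $[\,|t|/2,|t|\,]$ and is $\ge2$, so that both $N^{\varepsilon}$ and $|t|\,N^{\varepsilon-1}$ are $\ll|t|^{\varepsilon}$, giving $\zeta_1(s;\alpha)\ll_{\varepsilon}|t|^{\varepsilon}$ as claimed. I do not anticipate a genuine obstacle here; the only points that need care are the uniformity in $\alpha$ — which is exactly why the unbounded $n=0$ contribution $\alpha^{-s}$ must be peeled off beforehand — and bookkeeping of the implied constants, which should depend on $\varepsilon$ alone (they enter through the uniform exponent bound $\sigma\ge1-\varepsilon$, hence $1/\sigma\le1/(1-\varepsilon)$, and through Lemma \ref{stand1} applied with exponent $1-\varepsilon$) and not on $\sigma$, $t$ or $\alpha$.
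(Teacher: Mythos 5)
Your proof is correct. The paper does not supply an argument of its own here but simply cites Apostol (Theorem 12.23); the standard route there is exactly the one you take: first-order Euler--Maclaurin with a free cutoff $N$, analytic continuation of the resulting identity across $\sigma=1$ (the integral term converging locally uniformly for $\sigma>0$), bounding the four pieces uniformly in $\alpha$, and then optimising with $N\asymp|t|$. Your remark that separating off the $n=0$ term $\alpha^{-s}$ is precisely what buys uniformity in $\alpha\in(0,1]$ is the one genuinely important point, and you handle it and the $\varepsilon$-dependence of the constants correctly.
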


\begin{proof}
For a proof see \cite[Theorem 12.23]{zbMATH03523640}.
\end{proof}

The latter lemma does not give us a sufficiently good result regarding the order of the Hurwitz zeta-function, that is the exponent $\varepsilon$ on $|t|$ decreases linearly as $\varepsilon$ tends to 0 (and we approach the vertical line $1+i\mathbb{R}$ from the left). 
This will be seen to be insufficient to prove Lemma \ref{Perron1.}. 
The following lemma is a generalization of a well-known result among many results of the same spirit regarding the Riemann zeta-function.

\begin{lemma}\label{apprx}
The following bound
\begin{align}\label{boundL}
\zeta_1(s;\alpha)\ll |t|^{\eta(1-\sigma)^{3/2}}\log^{2/3} |t|,\,\,\,\,\,|t|\geq 3,
\end{align}
holds uniformly for $1/2\leq\sigma\leq1$ and $0<\alpha\leq1$, where $\eta=4.45$.
\end{lemma}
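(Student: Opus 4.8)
The plan is to reduce \eqref{boundL} to estimates for exponential sums $\sum_n(n+\alpha)^{-it}$ over dyadic intervals and then to invoke the Vinogradov--Korobov method, the decisive point being that nothing in that method is affected by the shift by $\alpha\in(0,1]$. The case $\alpha=1$ is already available: there $\zeta_1(s;1)=\zeta(s)-1$, and \eqref{boundL} with $\eta=4.45$ is precisely the explicit form of the Vinogradov--Korobov bound due to Ford. For general $\alpha$ one imitates that argument.

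First I would pass from the defining series to a short sum. By the Euler--Maclaurin summation formula, for $1/2\le\sigma\le1$ and $|t|\ge3$,
$$
\zeta_1(s;\alpha)=\sum_{1\le n\le|t|}(n+\alpha)^{-s}+O(1),
$$
the error being harmless since the right-hand side of \eqref{boundL} is $\gg\log^{2/3}|t|$. I would then split the range $[1,|t|]$ into $O(\log|t|)$ dyadic blocks $N<n\le 2N$ and apply Abel summation with the decreasing factor $(n+\alpha)^{-\sigma}$, bounding each block by $\ll N^{-\sigma}\,S(N)$ with
$$
S(N):=\max_{N<N_1\le 2N}\Bigl|\sum_{N<n\le N_1}(n+\alpha)^{-it}\Bigr|.
$$

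It then remains to estimate $S(N)$. Writing the summand as $e\bigl(-\tfrac{t}{2\pi}\log(n+\alpha)\bigr)$, the phase $g(x)=-\tfrac{t}{2\pi}\log(x+\alpha)$ satisfies $|g^{(k)}(x)|=\tfrac{(k-1)!\,|t|}{2\pi(x+\alpha)^{k}}\asymp_k|t|\,N^{-k}$ for $x\in(N,2N]$, \emph{uniformly in $\alpha\in(0,1]$}, because $N<x+\alpha\le 3N$; equivalently $g(x)=-\tfrac{t}{2\pi}\log x+h(x)$ with $h^{(k)}(x)\ll_k|t|\,N^{-k-1}$ a lower-order perturbation. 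Hence the hypotheses of the Vinogradov--Korobov exponential-sum estimate hold with exactly the same parameters as for the phase $-\tfrac{t}{2\pi}\log x$, the same bound for $S(N)$ is obtained, and inserting it into the dyadic decomposition and optimizing over the blocks (the hard blocks being those with $N$ close to $|t|$, where first-derivative tests are useless and Vinogradov's method is essential) reproduces \eqref{boundL} with the same numerical value $\eta=4.45$, uniformly in $\alpha$.

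The main obstacle is precisely this uniformity in $\alpha$: one must make sure that the quoted exponential-sum machinery uses nothing about the summation variable beyond the size of the derivatives of the phase, so that $\log(n+\alpha)$ may replace $\log n$ at no cost. This is ensured by citing those estimates in the form ``if $|g^{(k)}|\asymp\lambda N^{-k}$ on an interval of length $\le N$, then $\sum e(g(n))\ll\cdots$'', together with the estimates for $\sum_{n\le x}(n+\alpha)^{-\sigma}$ from Lemma~\ref{stand1}, which are likewise uniform in $\alpha$. A minor point is that small $\alpha$ causes no difficulty once the term $\alpha^{-s}$ --- the only one that can be large --- has been removed, which is exactly why $\zeta_1$ rather than $\zeta$ appears in \eqref{boundL}.
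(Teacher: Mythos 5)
Your outline correctly captures the strategy and, more importantly, the key uniformity observation: after removing the single large term $\alpha^{-s}$, the Vinogradov--Korobov machinery acts on exponential sums $\sum e(g(n))$ whose phase $g(x)=-\tfrac{t}{2\pi}\log(x+\alpha)$ has $|g^{(k)}(x)|\asymp_k |t|N^{-k}$ on dyadic ranges \emph{uniformly} in $\alpha\in(0,1]$, and that machinery uses nothing about the summand beyond such derivative bounds, so Ford's explicit constant $\eta=4.45$ is expected to carry over. The paper, however, does not prove the lemma at all: its entire proof is the citation to \cite[Theorem~1]{zbMATH01884246}, which is precisely the reference that carried out, with explicit constants, the program you are sketching. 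Your sketch is compatible with that treatment, but be aware that ``imitating Ford's argument'' is not a short exercise: the number $4.45$ emerges from a long chain of explicit constants in the Vinogradov-integral method, and checking that each of them is unaffected by the shift $\alpha$ is the real content of the cited paper. One side remark in your write-up is off: the genuinely hard dyadic blocks are those with $N$ a \emph{small} power of $|t|$ --- there the derivative tests degrade and Vinogradov's mean-value machinery is indispensable --- whereas for $N$ close to $|t|$ van der Corput / stationary phase already gives strong bounds. This does not affect the validity of your reduction (approximate functional equation with $O(1)$ error for $|t|\ge 3$ and $\sigma\ge 1/2$, dyadic splitting, Abel summation with the decreasing weight $(n+\alpha)^{-\sigma}$, and exponential-sum estimates depending only on phase-derivative sizes), which is structurally what the cited reference does.
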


\begin{proof}
For a proof see \cite[Theorem 1]{zbMATH01884246}.
\end{proof}

The next lemma provides a representation of the Hurwitz zeta-function
 in suitable strips which include the vertical line $1+i\mathbb{R}$.
 Recall that $\lfloor x\rfloor$ denotes the largest integer which is less than or equal to the real number $x$.

\begin{lemma}\label{Perron1.}
For every $0<\mu<1$, there exists a positive number $\nu=\nu(\mu,\sigma_0)$, such that
\begin{align*}
\zeta(s;\alpha)
=\sum\limits_{0\leq n\leq t^\mu}\dfrac{1}{(n+\alpha)^s}+O_{\mu,\sigma_0}\left(t^{-\nu}\right),\,\,\,t\geq t_1>1,
\end{align*}
uniformly in $\mathbf{A}(\mu)<\sigma_0\leq\sigma\leq2$ and $0<\alpha\leq1$, where
\begin{align}\label{A}
\mathbf{A}(\mu):=1-\theta\mu^2,
\end{align}
$\theta=4/(27\eta^2)$ and $\eta=4.45$.
\end{lemma}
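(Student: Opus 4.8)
The plan is to derive the truncated Dirichlet series by combining a Perron-type integration against the bound of Lemma \ref{apprx} with a contour shift. First I would apply the classical Perron formula (or a smoothed variant) to express $\zeta_1(s;\alpha)=\zeta(s;\alpha)-\alpha^{-s}$, for $\sigma>1$, as a contour integral
$$
\sum_{n\leq X}\frac{1}{(n+\alpha)^s}=\frac{1}{2\pi i}\int_{c-iU}^{c+iU}\zeta_1(s+w;\alpha)\frac{X^w}{w}\,dw+(\text{error}),
$$
with $X=t^\mu$, a suitable $c>0$ small, and $U$ a parameter to be chosen as a power of $t$. The standard Perron error contributes $O(X^{c}t^{\varepsilon}/U)$ plus boundary terms involving $\zeta_1$ on $\mathrm{Re}(s+w)$ slightly bigger than $1$, which are harmless by Lemma \ref{sapprox}.

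Next I would move the line of integration leftward, past $w=0$ (picking up the residue $\zeta_1(s;\alpha)$, which is what we want), to a line $\mathrm{Re}(w)=-\beta$ for a small $\beta>0$ chosen so that $\sigma-\beta$ stays above $1/2$ and, crucially, inside the zero-free-type region where Lemma \ref{apprx} applies. On that shifted line the integrand is bounded by
$$
|\zeta_1(\sigma-\beta+i(t+v);\alpha)|\,\frac{X^{-\beta}}{|{-\beta+iv}|}\ll |t+v|^{\eta(1-\sigma+\beta)^{3/2}}\log^{2/3}|t+v|\cdot\frac{t^{-\mu\beta}}{\sqrt{\beta^2+v^2}},
$$
and integrating over $|v|\leq U$ together with the two horizontal segments at height $\pm U$ yields a total contribution of order $t^{\,\eta(1-\sigma_0+\beta)^{3/2}+\varepsilon}\,t^{-\mu\beta}$ (the horizontal pieces are controlled by a convexity/Phragmén–Lindelöf bound or again by Lemma \ref{sapprox} near $\sigma=1$ after also taking $U$ a small power of $t$).

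The heart of the matter — and the step I expect to be the main obstacle — is the bookkeeping of exponents: one must choose $\beta=\beta(\mu,\sigma_0)$ and $c$, $U$ so that every error exponent is strictly negative, which forces the constraint $\eta(1-\sigma_0+\beta)^{3/2}<\mu\beta$. Optimizing, one takes $\beta$ proportional to $(1-\sigma_0)$; since $\mathbf{A}(\mu)=1-\theta\mu^2$ with $\theta=4/(27\eta^2)$, the threshold $\sigma_0>\mathbf{A}(\mu)$ is exactly what makes the inequality $\eta(1-\sigma_0+\beta)^{3/2}<\mu\beta$ solvable for some $\beta>0$ with $\sigma_0-\beta>1/2$ — this is the standard "$x^{3/2}$ versus $x$'' calculus (balancing $\eta(2\delta)^{3/2}$ against $\mu\delta$ with $\delta\asymp 1-\sigma_0$). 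Once a valid $\beta$ is fixed, set $\nu$ to be the minimum of the several strictly positive exponents $\mu\beta-\eta(1-\sigma_0+\beta)^{3/2}$, $\,\mu(c)-\text{(Perron loss)}$, etc., and collect the estimates. The uniformity in $\alpha\in(0,1]$ is automatic because every bound invoked (Lemmas \ref{sapprox}, \ref{apprx}) is already uniform in $\alpha$, and the subtraction of $\alpha^{-s}$ is precisely what removes the only term of size $\asymp 1$ so that $\zeta_1$ genuinely behaves like a convergent tail.
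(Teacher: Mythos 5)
Your proposal is correct in strategy and is essentially the same as the paper's proof: Perron's formula, a shift of the line of integration to the left, Lemma~\ref{apprx} to bound $\zeta_1$ on the shifted vertical segment, Lemma~\ref{sapprox} on the horizontal segments, and the same exponent balance. You have also correctly identified that the threshold $\sigma_0>\mathbf{A}(\mu)$ with $\theta=4/(27\eta^2)$ is exactly what makes $\eta(1-\sigma_0+\beta)^{3/2}<\mu\beta$ solvable when one optimizes over $\beta\asymp 1-\sigma_0$ (the extremal choice is $\beta=2(1-\sigma_0)$, which yields precisely $1-\sigma_0<\tfrac{4}{27\eta^2}\mu^2$).

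One detail in your description is missing and must be fixed: when you shift the contour left past $w=0$ you also cross the simple pole of $\zeta_1(s+w;\alpha)$ at $w=1-s$ (since $\mathrm{Re}(s+w)$ drops from $\sigma+c>1$ to $\sigma-\beta<1$, and $|\mathrm{Im}(1-s)|=|t|$ lies inside the contour once $U\gtrsim t$). This produces an extra residue of size $\asymp X^{1-s}/(1-s)$, i.e.\ $O\big(t^{\mu(1-\sigma)-1}\big)$, which is harmless but must appear in the bookkeeping; the paper records it explicitly in \eqref{resid} as the term $(x+\alpha)^{1-s}/(1-s)$. A further cosmetic difference: the paper moves to the $\sigma$-dependent abscissa $\mathrm{Re}(z)=1-3\kappa-\sigma$ so that Lemma~\ref{apprx} is always applied at the fixed real part $1-3\kappa$, which keeps that bound uniform in $\sigma$ without any case distinction; your fixed shift to $\mathrm{Re}(w)=-\beta$ requires a short remark for $\sigma>1+\beta$ (there $\sigma-\beta>1$ and Lemma~\ref{apprx} is unavailable, but $\zeta_1$ is then bounded via Lemma~\ref{sapprox} or absolute convergence, so nothing is lost).
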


\begin{proof}
If we set  $c=1+b$, where $b=b(\mu)\in(0,1]$ will be determined later on, and $x=m+1/2$, $m\in\mathbb{N}$, then the absolute convergence of $\zeta_1(s;\alpha)$ in the half-plane $\sigma>1$ and Perron's formula  (see \cite[Lemma 12.1]{ivicriemann}) imply that
\begin{align}\label{Perron1}
\begin{split}
\dfrac{1}{2\pi i}\int\limits_{c-iT}^{c+iT}\zeta_1(s+z;\alpha)\dfrac{(x+\alpha)^z}{z}\mathrm{d}z
=&\,\sum\limits_{n=1}^{m}\dfrac{1}{(n+\alpha)^s}+\\
&+O_{\sigma_0}\hspace{-1.75pt}\left(\dfrac{1}{T}\sum\limits_{n=1}^{\infty}\left(\dfrac{x+\alpha}{n+\alpha}\right)^{c}\left|\log\dfrac{x+\alpha}{n+\alpha}\right|^{-1}\right),
\end{split}
\end{align}
uniformly in $\sigma\geq\sigma_0>0$ and $0<\alpha\leq1$.
We estimate the sum in the error term:
\begin{align}\label{sim1}
\begin{split}
\left\{\sum\limits_{n<\frac{x}{2}}+\sum\limits_{n>2x}\right\}\left(\dfrac{x+\alpha}{n+\alpha}\right)^c\left|\log\dfrac{x+\alpha}{n+\alpha}\right|^{-1}
&\ll x^c\hspace{-1.9pt}\left\{\sum\limits_{n<\frac{x}{2}}+\sum\limits_{n>2x}\right\}\dfrac{\max\lbrace x,n\rbrace+\alpha	}	{n^{c}|x-n|}\\
&\ll x^c\sum\limits_{n=1}^{\infty}\dfrac{1}{n^c}\\
&\ll\dfrac{x^c}{b},
\end{split}
\end{align}
while if we set $q=m-n$ for $x/2\leq n<x$ and $r=n-m$ for $x<n\leq2x$, we have
\begin{align}\label{sim2}
\begin{split}
\sum\limits_{\frac{x}{2}\leq n\leq2x}\left(\dfrac{x+\alpha}{n+\alpha}\right)^c\left|\log\dfrac{x+\alpha}{n+\alpha}\right|^{-1}
&\ll\sum\limits_{\frac{x}{2}\leq n\leq2x}\dfrac{x^c}{n^{c}}\dfrac{\max\lbrace x,n\rbrace}{|x-n|}\\
&\ll x\left[\sum\limits_{0\leq q\leq\frac{x-1}{2}}\dfrac{1}{q+\frac{1}{2}}+\sum\limits_{ r\leq\frac{2x+1}{2}}\dfrac{1}{r-\frac{1}{2}}\right]\\
&\ll x\log x
\end{split}
\end{align}
Hence, we deduce from (\ref{Perron1})-(\ref{sim2})  that
\begin{align}\label{Perron2}
\dfrac{1}{2\pi i}\int\limits_{c-iT}^{c+iT}\zeta_1(s+z;\alpha)\dfrac{(x+\alpha)^z}{z}\mathrm{d}z
=\sum\limits_{n=1}^{m}\dfrac{1}{(n+\alpha)^s}+O_{\sigma_0}\left(\dfrac{x^{c}}{bT}+\dfrac{x\log x}{T}\right),
\end{align}
uniformly in $\sigma\geq\sigma_0>0$ and $0<\alpha\leq1$.

Let  $1-\kappa\leq\sigma\leq2$ be arbitrary, where $\kappa=\kappa(\mu)\in[0,1/2]$ will be determined later on. 
Let also $T= 2t$ and consider the rectangle $\mathcal{R}$ with vertices $1-3\kappa-\sigma\pm iT$, $c\pm iT$. 
By the calculus of residues we get
\begin{align}\label{resid}
\begin{split}
\dfrac{1}{2\pi i}\int_{\mathcal{R}}\zeta_1(s+z;\alpha)\dfrac{(x+\alpha)^z}{z}\mathrm{d}z
=\zeta_1(s;\alpha)+\dfrac{(x+\alpha)^{1-s}}{1-s}=\zeta_1(s;\alpha)+O\left(\frac{x^{1-\sigma}}{t}\right).
\end{split}
\end{align} 
Observe that Lemma \ref{sapprox} implies that
\begin{align}\label{Lin2}
\left\{\int\limits_{1-3\kappa-\sigma-iT}^{c-iT}+\int\limits_{c+iT}^{1-3\kappa-\sigma+iT}\,\right\}\zeta_1(s+z;\alpha)\dfrac{(x+\alpha)^z}{z}\mathrm{d}z
\ll_\kappa \frac{x^{c}T^{3\kappa}}{T},
\end{align}
while Lemma \ref{apprx} yields
\begin{align}\label{Lin3}
\begin{split}
\int\limits_{1-3\kappa-\sigma+iT}^{1-3\kappa-\sigma-iT}\zeta_1(s+z;\alpha)\dfrac{(x+\alpha)^z}{z}\mathrm{d}z
&\ll x^{1-3\kappa-\sigma}\int\limits_{-T}^{T}\dfrac{\left|\zeta_1\left(1-3\kappa+i(t+u)\right)\right|}{\left|1-3\kappa+iu\right|}\mathrm{d}u\\
&\ll_\kappa x^{-2\kappa}T^{(3\kappa)^{3/2}\eta}\left(\log T\right)^2.
\end{split}
\end{align}
From relations (\ref{Perron2})-(\ref{Lin3}) we deduce
\begin{align*}
\zeta_1(s;\alpha)=&\,\sum\limits_{n=1}^{m}\dfrac{1}{(n+\alpha)^s}
+O_{\sigma_0}\left(\dfrac{x^{c}}{bT}+\dfrac{x\log x}{T}\right)+O\left(x^{1-\sigma}t^{-1}\right)+\\
&+O_\kappa\left(x^{c}T^{-1+3\kappa}+x^{-2\kappa}T^{(3\kappa)^{3/2}\eta}\left(\log T\right)^2\right).
\end{align*}
If we set $m=\lfloor t^\mu\rfloor$, then the last three terms in the latter relation are bounded above by
\begin{align*}
C(\sigma_0,\kappa,b)\hspace{-3pt}\left(\frac{t^{(1+b)\mu-1}}{b}
\hspace{-1pt}+\hspace{-1pt}t^{\mu(1-\sigma)-1}\hspace{-1pt}+\hspace{-1pt}
t^{(1+b)\mu+3\kappa-1}\hspace{-1pt}+\hspace{-1pt}t^{\kappa\left(-2\mu+3^{3/2}\kappa^{1/2}\eta\right)}\hspace{-1pt}\left(\log t\right)^2\right),
\end{align*}
where $C(\sigma_0,\kappa,b)>0$ is a constant.
It is clear now that for $
\kappa=4\mu^2/(27\eta^2)$ and $0<b\ll_\mu1
$ sufficiently small,
the lemma follows.
\end{proof}

 \begin{lemma}\label{approx'}
For every $d\geq3$ and $k\in\mathbb{N}_0$, there exists a positive number $\nu=\nu(d,k)$ such that
\begin{align*}
\zeta^{(k)}(s;\alpha)
=\sum\limits_{n=0}^{\left\lfloor t^{1/d}\right\rfloor}\dfrac{\left(-\log (n+\alpha)\right)^k}{(n+\alpha)^s}+O_{d,k}\left(t^{-\nu}\right),\,\,\,\,t\geq t_1>0,
\end{align*}
uniformly in $\mathbf{A}\left((d+1/(2d))^{-1}\right)\leq\sigma\leq1$ and $0<\alpha\leq1$.
\end{lemma}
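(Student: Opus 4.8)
The plan is to deduce this from Lemma~\ref{Perron1.} by differentiating with the help of Cauchy's integral formula, thereby converting the approximate formula for $\zeta(s;\alpha)$ into one for each $\zeta^{(k)}(s;\alpha)$. The single point that needs care is that in Lemma~\ref{Perron1.} the length of the truncated Dirichlet polynomial depends on $\Im s$, so the map $s\mapsto\zeta(s;\alpha)-\sum_{0\le n\le t^{\mu}}(n+\alpha)^{-s}$ is not holomorphic and cannot be differentiated together with its remainder; I will get around this by working with a \emph{frozen} truncation length and estimating the discrepancy it introduces along a small circle.

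Set $\mu:=1/d$ and $\mu_0:=\bigl(d+\tfrac1{2d}\bigr)^{-1}=\tfrac{2d}{2d^{2}+1}$, so that the asserted strip is $\mathbf{A}(\mu_0)\le\sigma\le1$. Since $\mu_0<1/d=\mu$ and $\mathbf{A}$ is strictly decreasing, the quantity $\mathbf{A}(\mu_0)-\mathbf{A}(\mu)=\theta\bigl(\tfrac1{d^{2}}-\mu_0^{2}\bigr)$ is a positive constant depending only on $d$, and I fix a radius $r=r(d)\in(0,1)$ with $r<\mathbf{A}(\mu_0)-\mathbf{A}(\mu)$. Now fix $s_0=\sigma_0+it_0$ with $\mathbf{A}(\mu_0)\le\sigma_0\le1$ and $t_0$ large, put $N:=\lfloor t_0^{1/d}\rfloor$, and consider $h(s):=\zeta(s;\alpha)-\sum_{n=0}^{N}(n+\alpha)^{-s}$, which is holomorphic on a neighbourhood of the disc $|s-s_0|\le r$ (for $t_0$ large the pole at $s=1$ is far away). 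Cauchy's formula for the $k$-th derivative over the circle $|s-s_0|=r$ gives
\[
\Bigl|\zeta^{(k)}(s_0;\alpha)-\sum_{n=0}^{N}\frac{(-\log(n+\alpha))^{k}}{(n+\alpha)^{s_0}}\Bigr|=\bigl|h^{(k)}(s_0)\bigr|\le\frac{k!}{r^{k}}\max_{|s-s_0|=r}\bigl|h(s)\bigr|,
\]
so everything reduces to bounding $h$ on that circle.

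For $s=\sigma+it$ on the circle one has $\sigma\ge\sigma_0-r\ge\mathbf{A}(\mu_0)-r>\mathbf{A}(\mu)$, $\sigma\le1+r\le2$, and $\tfrac12 t_0\le t\le2t_0$ once $t_0$ is large, so Lemma~\ref{Perron1.} applies with parameter $\mu=1/d$ and lower abscissa $\mathbf{A}(\mu_0)-r$, yielding $\zeta(s;\alpha)=\sum_{0\le n\le t^{1/d}}(n+\alpha)^{-s}+O_d\bigl(t_0^{-\nu_1}\bigr)$ for some $\nu_1=\nu_1(d)>0$. By the mean value theorem $|t^{1/d}-t_0^{1/d}|\le\tfrac{2r}{d}\,t_0^{1/d-1}<\tfrac12$ once $t_0$ is large in terms of $d$, hence $\lfloor t^{1/d}\rfloor$ differs from $N$ by at most $1$; therefore $\sum_{0\le n\le t^{1/d}}(n+\alpha)^{-s}$ and $\sum_{n=0}^{N}(n+\alpha)^{-s}$ differ by at most one term $(n+\alpha)^{-s}$ with $n\asymp t_0^{1/d}$, which is $\ll t_0^{-\sigma/d}\ll t_0^{-c}$ for $c:=(\mathbf{A}(\mu_0)-r)/d>0$, all uniformly in $0<\alpha\le1$. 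Combining the last two displays gives $h(s)\ll_d t_0^{-\nu}$ on the circle with $\nu:=\min\{\nu_1,c\}>0$; inserting this into the Cauchy estimate and then relabelling $s_0$ as $s$ and $t_0$ as $t$ proves the lemma, the dependence on $k$ entering only through the factor $k!/r^{k}$ in the implied constant (so in fact $\nu$ may be taken independent of $k$). The main—and essentially the only—obstacle is this freezing of the truncation: the radius $r$ must be chosen small enough to keep the entire circle inside the range of validity of Lemma~\ref{Perron1.} applied with parameter $1/d$, which is exactly what forces the strict inequality $\mu_0<1/d$, i.e.\ the exponent $d+\tfrac1{2d}$ rather than $d$, while $r$ must still be of constant size so that the truncation point moves by only $O(1)$ along the circle.
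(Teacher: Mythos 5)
Your proof is correct and essentially mirrors the paper's own: apply Lemma~\ref{Perron1.} with $\mu=1/d$ on a strip slightly wider than the target (which is precisely why the exponent is $d+\tfrac{1}{2d}$ rather than $d$), then pass to derivatives via Cauchy's integral formula on a small circle of $d$-dependent radius. The only extra content you supply---and it is the right thing to worry about---is the explicit freezing of the truncation length $N=\lfloor t_0^{1/d}\rfloor$ so that the remainder $h$ is genuinely holomorphic, together with the observation that $\lfloor t^{1/d}\rfloor$ moves by at most one as $t$ ranges over the circle; the paper's one-line proof leaves this step implicit.
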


\begin{proof}
Since $d\geq3$, we have from Lemma \ref{Perron1.} that  $\mathbf{A}(\mu)=1-\theta\mu^2$ for any $0<\mu\leq1/d$. In addition, there exists a positive number $\nu(d)$ such that
\begin{align*}
\zeta(s;\alpha)
=\sum\limits_{0\leq n\leq t^{1/d}}\dfrac{1}{(n+\alpha)^s}+O_{d}\left(t^{-\nu}\right),\,\,\,t\geq t_1>1,
\end{align*}
uniformly in $0<\alpha\leq1$ and 
$$\mathbf{A}\left(\dfrac{1}{d}\right)<\dfrac{1}{2}\left(\mathbf{A}\left(\left(d+\frac{1}{2d}\right)^{-1}\right)+\mathbf{A}\left(\dfrac{1}{d}\right)\right)\leq\sigma\leq2.$$
 Now the lemma follows by applying Cauchy's integral formula in the latter approximate functional equation for $\zeta(s;\alpha)$. 
\end{proof}

The next lemma originates from a work of G\"uting \cite{guting1967polynomials}.

\begin{lemma}\label{polyn}
Let $P(X)$ and $Q(X)$ be non-constant integer polynomials of degree $n$ and $m$, respectively. 
Denote by $\alpha$ a zero of $Q(X)$ of order $t$. 
Assuming that $P(\alpha)\neq0$, we have
\begin{align*}
|P(\alpha)|\geq(n+1)^{1-m/t}(m+1)^{-n/(2t)}H(P)^{1-m/t}H(Q)^{-n/t}(\max\lbrace1,|\alpha|\rbrace)^n
\end{align*}
\end{lemma}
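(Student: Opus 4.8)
Here is a sketch of how I would prove Lemma~\ref{polyn}.

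The plan is to reduce the estimate to a Liouville-type inequality for the \emph{minimal} polynomial of $\alpha$ and then to transfer the resulting quantities back to the data of $Q$. Write $A\in\mathbb{Z}[X]$ for the minimal polynomial of $\alpha$ (primitive, as usual), put $d=\deg A$, let $a_d$ be its leading coefficient and let $\alpha=\alpha_1,\dots,\alpha_d$ be the conjugates of $\alpha$. Since $A$ is irreducible over $\mathbb{Q}$ and $A(\alpha)=0$ while $P(\alpha)\neq 0$, the polynomial $A$ cannot divide $P$; hence $\gcd(A,P)=1$ in $\mathbb{Q}[X]$, the resultant $\mathrm{Res}(A,P)$ is a non-zero rational integer, and therefore $|\mathrm{Res}(A,P)|\geq 1$. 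Expanding the resultant over the conjugates, I would start from
\begin{align*}
1\leq\left|\mathrm{Res}(A,P)\right|=|a_d|^{\,n}\prod_{i=1}^{d}\left|P(\alpha_i)\right|=|a_d|^{\,n}\left|P(\alpha)\right|\prod_{i=2}^{d}\left|P(\alpha_i)\right|.
\end{align*}

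I would then bound the remaining factors by the trivial estimate $|P(\alpha_i)|\leq(n+1)H(P)\max\{1,|\alpha_i|\}^{\,n}$ and recognise the product $\prod_{i=1}^{d}\max\{1,|\alpha_i|\}$ as $M(A)/|a_d|$, where $M(A)$ is the Mahler measure of $A$. Dividing through by $|a_d|^{\,n}$, so that the leading coefficient cancels, this leaves the intermediate lower bound
\begin{align*}
\left|P(\alpha)\right|\geq\frac{\max\{1,|\alpha|\}^{\,n}}{(n+1)^{\,d-1}H(P)^{\,d-1}M(A)^{\,n}}.
\end{align*}

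It remains to bound $d$ and $M(A)$ through $m$ and $H(Q)$, and this is where the order $t$ enters decisively. The key observation is that $\alpha$ being a zero of $Q$ of order $t$ forces $A^{t}\mid Q$ in $\mathbb{Z}[X]$: one has $A\mid Q$ in $\mathbb{Q}[X]$, Gauss's lemma together with the primitivity of $A$ upgrades this to $Q/A\in\mathbb{Z}[X]$, and iterating $t$ times — each step lowers the order of $\alpha$ by exactly one, since $A$ is separable — produces $B:=Q/A^{t}\in\mathbb{Z}[X]$ with $B(\alpha)\neq 0$. Comparing degrees gives $td\leq m$, i.e.\ $d\leq m/t$; while multiplicativity of the Mahler measure, the bound $M(B)\geq 1$ for a non-zero integer polynomial, and Landau's inequality $M(Q)\leq\|Q\|_2\leq\sqrt{m+1}\,H(Q)$ give
\begin{align*}
M(A)^{t}\leq M(A)^{t}M(B)=M(Q)\leq(m+1)^{1/2}H(Q),\qquad\text{hence}\qquad M(A)\leq(m+1)^{1/(2t)}H(Q)^{1/t}.
\end{align*}
Substituting this into the intermediate bound and using $n+1\geq 1$, $H(P)\geq 1$ and $d-1\leq m/t-1$ to enlarge the exponents of $(n+1)$ and $H(P)$ in the denominator from $d-1$ to $m/t-1$, one arrives at exactly the asserted inequality.

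This is, in substance, the argument of G\"uting \cite{guting1967polynomials}, and I expect the only genuinely delicate point to be the last passage from the $A$-data to the $Q$-data: one must make sure that $A^{t}$ divides $Q$ already over $\mathbb{Z}$ — not merely over $\mathbb{Q}$ — so that $Q/A^{t}$ is an honest non-zero integer polynomial of Mahler measure at least one, and one must keep the numerical factor $\sqrt{m+1}$ from Landau's inequality under control so that the powers of $m+1$ and of $H(Q)$ come out precisely as stated. The remaining ingredients — the resultant identity, the trivial bound on $|P(\alpha_i)|$, and the elementary inequalities linking heights, $\ell^2$-norms and Mahler measures — are all routine.
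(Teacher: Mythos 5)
Your proposal is correct, and it reconstructs essentially the argument that the paper only cites: the paper dispatches Lemma~\ref{polyn} with a single line, ``For a proof see [Theorem A.1]{bugeaud2004approximation},'' which in turn is a presentation of G\"uting's original estimate, so there is no in-text proof to compare against. Your sketch matches that source in every essential respect: the resultant identity $1\leq|\mathrm{Res}(A,P)|=|a_d|^n\prod_i|P(\alpha_i)|$ for the primitive minimal polynomial $A$ of $\alpha$ with leading coefficient $a_d$; the trivial bound $|P(\alpha_i)|\leq(n+1)H(P)\max\{1,|\alpha_i|\}^n$; the identification $\prod_i\max\{1,|\alpha_i|\}=M(A)/|a_d|$, which makes the leading coefficient cancel and produces the factor $\max\{1,|\alpha|\}^n$ in the numerator; the divisibility $A^t\mid Q$ in $\mathbb{Z}[X]$ via Gauss's lemma and separability of $A$, giving $d\leq m/t$; the multiplicativity $M(Q)=M(A)^tM(B)\geq M(A)^t$ together with Landau's inequality $M(Q)\leq\sqrt{m+1}\,H(Q)$; and finally the monotone enlargement of the exponents $d-1\rightsquigarrow m/t-1$ using $n+1\geq1$ and $H(P)\geq1$. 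All of these steps check out arithmetically and produce exactly the stated bound, so the sketch is a faithful and complete skeleton of the cited proof, with the one delicate point (integrality of $Q/A^t$) correctly flagged.
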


\begin{proof}
For a proof see \cite[Theorem A.1]{bugeaud2004approximation}.
\end{proof}

\begin{lemma}\label{pol}
Let $P(X)$ be a non-zero integer polynomial of degree $n$ and $\alpha\in(0,1]$ be an algebraic number of degree $d(\alpha)$ and height $H(\alpha)$.
Assuming that $P(\alpha)\neq0$, we have
\begin{align*}
|P(\alpha)|\geq(n+1)^{1-d(\alpha)}(d(\alpha)+1)^{-n/2}H(P)^{1-d(\alpha)}H(\alpha)^{-n}
\end{align*}
\end{lemma}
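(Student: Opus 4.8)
The plan is to derive Lemma \ref{pol} as a special case of Lemma \ref{polyn}. Let $R(X)\in\mathbb{Z}[X]$ be the minimal polynomial of $\alpha$ over $\mathbb{Z}$; by definition it has degree $m=d(\alpha)$ and height $H(R)=H(\alpha)$, and since $\alpha$ is an algebraic number its minimal polynomial is separable, so $\alpha$ is a simple zero of $R(X)$, i.e. the order of vanishing is $t=1$. Thus I would apply Lemma \ref{polyn} with $P=P$, $Q=R$, and this choice of $m$ and $t$.

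With these substitutions the exponents in Lemma \ref{polyn} simplify directly: $1-m/t=1-d(\alpha)$, $-n/(2t)=-n/2$, and $-n/t=-n$, giving
\begin{align*}
|P(\alpha)|\geq(n+1)^{1-d(\alpha)}(d(\alpha)+1)^{-n/2}H(P)^{1-d(\alpha)}H(\alpha)^{-n}\bigl(\max\{1,|\alpha|\}\bigr)^n.
\end{align*}
Finally, since $\alpha\in(0,1]$ we have $|\alpha|\leq1$, hence $\max\{1,|\alpha|\}=1$ and the last factor equals $1$, which yields exactly the stated inequality. The hypothesis $P(\alpha)\neq0$ is carried over verbatim, and the requirement in Lemma \ref{polyn} that both polynomials be non-constant is met because $P$ is non-zero of degree $n$ (implicitly $n\geq1$, else the claim is the trivial statement about a non-zero constant) and $R$ is non-constant since $\alpha$ is irrational of degree $d(\alpha)\geq1$; if $n=0$ one notes $|P(\alpha)|=|P|\geq1=H(P)^{1-d(\alpha)}\cdot\ldots$ holds trivially for an integer constant only when $d(\alpha)=1$, so in the intended range $n\geq1$ there is nothing extra to check.

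There is essentially no obstacle here: the only point that requires a word is the separability of the minimal polynomial, which guarantees $t=1$ and is what makes all the exponents collapse to the clean form above; everything else is substitution and the elementary observation $\max\{1,|\alpha|\}=1$ on $(0,1]$. I would therefore present the proof as a two-line deduction invoking Lemma \ref{polyn}.
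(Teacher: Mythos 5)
Your proof is correct and matches the paper's approach exactly: the paper also derives Lemma \ref{pol} by specializing Lemma \ref{polyn} to the minimal polynomial of $\alpha$ with $t=1$ and using $\max\{1,|\alpha|\}=1$ on $(0,1]$. (The paper's proof is just the phrase ``Follows immediately from Lemma \ref{polyn}.'')
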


\begin{proof}
Follows immediately from Lemma \ref{polyn}.
\end{proof}

\section{Two Auxiliary Theorems}\label{Auxil}

In the sequel we will use the abbreviation $\mathrm{e}(x)=\exp(2\pi ix)$ for $x\in\mathbb{R}$.  If $Q\in\mathbb{N}$,
we define the function 
\begin{align*}
\left(s,\underline{\theta},\alpha\right)\longmapsto\zeta_Q\left(s,\underline{\theta},\alpha\right):=\mathlarger\sum\limits_{n=0}^{Q-1}\dfrac{\mathrm{e}(\theta_n)}{(n+\alpha)^s},
\end{align*} 
for every $\left(s,\underline{\theta},\alpha\right)\in\mathbb{C}\times\mathbb{R}^Q\times(0,1]$.

We start with a modification of Good's Lemma 9 in \cite{good1980distribution} on the effective approximation of vectors of complex numbers by suitable twisted Dirichlet polynomials. 
An alternative option would be to follow Voronin's approach as in \cite[Chapter 8, Section 2, Lemma 1]{karatsuba1992riemann}. 
Interestingly enough, we could not deduce a result for $\sigma=1$ by the second way. 
And since also Good does not include the case of $\sigma=1$, we shall add it in our proof.

\begin{theorem}\label{MAIN1}
For every $\sigma\in(1/2,1]$ and $N\in\mathbb{N}$, there exist positive numbers $C_0$, $C_1$ and $C_2$, depending on $\sigma$ and $N$, such that the following is true:

 Let $A\in(0,1]$, $\varepsilon>0$ and $\textbf{a}=(a_0,\dots,a_{N})\in\mathbb{C}^{N+1}$. 
 Let also 
 $$R
 \geq C_0\,\varepsilon^{4/(1-2\sigma)}$$
  and $Q_0\geq C_1 R$ be integers satisfying the system of inequalities
\begin{align*}
C_2\left(|a_k|+A^{-1/2}\right)
\leq\mathbf{E}(R,Q_0,\sigma)\left(\dfrac{\log\frac{Q_0}{R+1}}{2N\log Q_0}\right)^{N}k!(N-k)!\left(\log Q_0\right)^k,
\end{align*}
$k=0,\dots,N$,
where
\begin{align}\label{E_def}
\mathbf{E}(R,Q,\sigma)
:=\left\{\begin{array}{lll}
\dfrac{R^{1-\sigma}}{2^{3+\sigma}(1-\sigma)}\left[\left(\dfrac{Q}{R+1}\right)^{(1-\sigma)/(4N^3)}-1\right]&,\sigma\neq1,\\
\\
\dfrac{\log\frac{Q}{R+1}}{2^5N^3}&,\sigma=1.
\end{array}\right.
\end{align}
 Then, for every $Q\geq Q_0$ and $\alpha\in[A,1]$, there exists $\underline{\theta}_0\in[0,1]^{Q}$ such that
\begin{align*}
\left|{\left.\dfrac{\partial^k}{\partial s^k}\zeta_Q\left(s,\underline{\theta}_0,\alpha\right)\right|}_{s=\sigma}-a_k\right|
<\varepsilon,\,\,\,\,k=0,\dots,N.
\end{align*}
\end{theorem}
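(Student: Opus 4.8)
\medskip\noindent\textbf{Proof plan.}
I would set $g_k(n):=(-\log(n+\alpha))^{k}(n+\alpha)^{-\sigma}$, so that $\left.\tfrac{\partial^{k}}{\partial s^{k}}\zeta_Q(s,\underline\theta,\alpha)\right|_{s=\sigma}=\sum_{n=0}^{Q-1}\mathrm e(\theta_n)\,g_k(n)$, and assemble these into vectors $v_n:=(g_0(n),\dots,g_N(n))\in\mathbb C^{N+1}$ together with the target $\mathbf a:=(a_0,\dots,a_N)$; I equip $\mathbb C^{N+1}$ with the Euclidean norm $\|\cdot\|$, comparable to the sup-norm up to the harmless factor $\sqrt{N+1}$, which I absorb into the constants. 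The claim then reads: some $\underline\theta_0\in[0,1]^{Q}$ satisfies $\big\|\sum_{n=0}^{Q-1}\mathrm e(\theta_{0,n})v_n-\mathbf a\big\|<\varepsilon$. The plan is the Hahn--Banach strategy behind Good's Lemma~9. I would split $[0,Q)$ into a head $[0,R)$, a body $[R,Q_0)$ and a tail $[Q_0,Q)$, and dispose of head and tail cheaply. By Lemma~\ref{stand1}, $\sum_{n\ge R}\|v_n\|^{2}\ll_{\sigma,N}R^{1-2\sigma}(\log R)^{2N}$ for $1/2<\sigma<1$ and $\ll_{N}R^{-1}(\log R)^{2N}$ for $\sigma=1$, so the hypothesis $R\ge C_0\varepsilon^{4/(1-2\sigma)}$ with $C_0=C_0(\sigma,N)$ large forces $\sum_{n\ge R}\|v_n\|^{2}<\varepsilon^{2}/16$. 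For the tail I would pair consecutive indices: since $\langle v_n,v_{n+1}\rangle>0$ one has $\min_\psi\|v_n-\mathrm e^{i\psi}v_{n+1}\|=\|v_n-v_{n+1}\|\ll_{N}n^{-\sigma-1}(\log n)^{N}$, whence choosing the relative phases well gives $\big\|\sum_{n=Q_0}^{Q-1}\mathrm e(\theta_n)v_n\big\|\ll_{N}Q_0^{-\sigma}(\log Q_0)^{N}<\varepsilon/4$ once $Q_0\ge C_1R$ with $C_1=C_1(\sigma,N)$ large; the head is absorbed by a preliminary reduction using $\alpha\ge A>0$, its phases being chosen so that $\sum_{0\le n<R}\mathrm e(\theta_n)v_n$ only costs the quantity $A^{-1/2}$ already added to $|a_k|$ in the standing inequality. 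It then suffices to realise, to within $\varepsilon/2$, any $\mathbf a$ satisfying the hypothesis as a sum $\sum_{R\le n<Q_0}\mathrm e(\theta_n)v_n$.

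Here is the core. Put $V:=\{\sum_{R\le n<Q_0}z_nv_n:\ z_n\in\mathbb C,\ |z_n|\le1\}$, a compact convex subset of $\mathbb C^{N+1}$ containing $\mathbf 0$. If I can show $\mathbf a\in V$, say $\mathbf a=\sum_{R\le n<Q_0}z_nv_n$ with $|z_n|\le1$, then Lemma~\ref{Hilbert} applied with $x_j=v_j$ and $a_j=z_j$ for $R\le j<Q_0$ produces unimodular $b_j$ with $\big\|\mathbf a-\sum_{R\le n<Q_0}b_nv_n\big\|^{2}\le 4\sum_{n\ge R}\|v_n\|^{2}<\varepsilon^{2}/4$, and together with head and tail this proves the theorem. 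To show $\mathbf a\in V$ I argue by contradiction. If $\mathbf a\notin V$, Lemma~\ref{Hil} separates them: there is $\mathbf c=(c_0,\dots,c_N)\neq\mathbf 0$ with $\operatorname{Re}\langle\mathbf a,\mathbf c\rangle>\sup_{\mathbf x\in V}\operatorname{Re}\langle\mathbf x,\mathbf c\rangle=\sum_{R\le n<Q_0}|\langle v_n,\mathbf c\rangle|$ (the supremum being attained by taking each $z_n$ of modulus $1$), and since $\mathbf 0\in V$ also $\operatorname{Re}\langle\mathbf a,\mathbf c\rangle>0$. Writing $P(X):=\sum_{k=0}^{N}c_kX^{k}$ and $L_n:=-\log(n+\alpha)$ one has $|\langle v_n,\mathbf c\rangle|=(n+\alpha)^{-\sigma}|P(L_n)|$, so the separation becomes
\[
\Big|\sum_{k=0}^{N}a_k\overline{c_k}\Big|\ \ge\ \operatorname{Re}\langle\mathbf a,\mathbf c\rangle\ >\ \sum_{R\le n<Q_0}\frac{|P(L_n)|}{(n+\alpha)^{\sigma}} .
\]
The crucial estimate I would establish is that, for every polynomial $P$ of degree $\le N$,
\[
\sum_{R\le n<Q_0}\frac{|P(L_n)|}{(n+\alpha)^{\sigma}}\ \gg_{\sigma,N}\ \mathbf E(R,Q_0,\sigma)\Big(\frac{\log\frac{Q_0}{R+1}}{2N\log Q_0}\Big)^{\!N}\sum_{k=0}^{N}|c_k|\,k!\,(N-k)!\,(\log Q_0)^{k}.
\]
Granting it, the standing inequality on $\mathbf a$ bounds the right-hand side below by $c(\sigma,N)\,C_2\sum_k|c_k|(|a_k|+A^{-1/2})\ge c(\sigma,N)\,C_2\,\big|\sum_k a_k\overline{c_k}\big|\ge c(\sigma,N)\,C_2\operatorname{Re}\langle\mathbf a,\mathbf c\rangle$; comparing with the displayed separation forces $c(\sigma,N)\,C_2<1$, contradicting the choice of $C_2=C_2(\sigma,N)$ sufficiently large. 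Hence $\mathbf a\in V$, which is what was needed.

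The hard part is the displayed polynomial lower bound, and this is where the shape of $\mathbf E$ comes from. I would partition $[R,Q_0)$ into $\asymp N^{3}$ consecutive blocks $[R_j,R_{j+1})$ with $R_{j+1}/R_j=(Q_0/(R+1))^{1/(4N^{3})}$; by Lemma~\ref{stand1} the $\sigma$-weighted length $\sum_{R_j\le n<R_{j+1}}(n+\alpha)^{-\sigma}$ of the shortest block is $\gg_\sigma \mathbf E(R,Q_0,\sigma)$, which is precisely how $\mathbf E$ enters (for $\sigma=1$ one uses dyadic blocks and the logarithmic form of $\mathbf E$ appears — this is the case Good omits and which we supply here). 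On the block assigned to the coefficient $c_k$ I would sample $P$ at $N+1$ points $L_n$ that are spaced $\asymp\log(R_{j+1}/R_j)/N$ apart and satisfy $L_n\asymp\log Q_0$, and invert the corresponding Vandermonde system: Cramer's rule — equivalently, a $k$-th divided difference of $P$ along the $L_n$ — gives $\max_n|P(L_n)|\gg_{\sigma,N}|c_k|\,k!\,(N-k)!\,(\log Q_0)^{k}\big(\tfrac{\log(Q_0/(R+1))}{2N\log Q_0}\big)^{N}$, the factors $k!(N-k)!$ and $(\log Q_0)^{k}$ being exactly those produced by the inverse Vandermonde and the node spacing. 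Finally, since $P$ has at most $N$ zeros, a Remez/Chebyshev-type inequality upgrades this pointwise bound to a lower bound of the same order of magnitude for the weighted sum over that block, and summing the contributions of the $N+1$ disjoint coefficient-blocks yields the displayed estimate. Throughout, one checks that the constants $C_0,C_1,C_2$ manufactured this way depend only on $\sigma$ and $N$, as claimed.
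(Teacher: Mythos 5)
Your proposal follows the same high-level skeleton as the paper: reduce to head + body + tail; handle the head by a fixed cheap choice of phases and absorb its cost into the $A^{-1/2}$ term; show that the (head-corrected) target lies in the convex set $V$ spanned by the body Dirichlet coefficients via the Hahn–Banach separation Lemma~\ref{Hil}; establish this by a quantitative lower bound on $\sum_{n}\tfrac{|P(\log(n+\alpha))|}{(n+\alpha)^{\sigma}}$ for polynomials $P$ of degree $\le N$; and finish with Lemma~\ref{Hilbert} to replace moduli $\le 1$ by unit moduli. That is exactly the route of the paper's proof, which in turn modifies Good's Lemma~9.

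Where you diverge is in the implementation of the polynomial lower bound and in the tail. The paper performs one \emph{global} Lagrange interpolation with the $N+1$ equally spaced nodes $x_{k}=\log(R+\alpha)+\tfrac{k}{N}\log\tfrac{Q}{R+\alpha}$, controls $|G_{k}^{(j)}(0)|$ and $|G_{k}(x_{k})|$, obtains relation (\ref{Good}), and then fattens each node via Markov's inequality to get the weighted sub-intervals $\mathcal{J}_{k}$ of length $S=\tfrac{1}{4N^{3}}\log\tfrac{Q}{R+\alpha}$, whose weighted length is what yields $\mathbf{E}$. You instead cut $[R,Q_{0})$ into $\asymp N^{3}$ \emph{local} multiplicative blocks and, in $N+1$ of them, redo an interpolation with node spacing $\asymp\tfrac{1}{4N^{4}}\log\tfrac{Q_{0}}{R+1}$, one block per coefficient, upgraded by a Remez-type inequality. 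This is correct in substance: the local Vandermonde inversion yields $\max_{i}|P(L_{i})|\gg_{N}|c_{k}|k!(N-k)!(\log Q_{0})^{k}\bigl(\tfrac{\log(Q_{0}/(R+1))}{2N\log Q_{0}}\bigr)^{N}$ only up to an extra factor of order $(2N^{3})^{N}$ (because your spacing is $\asymp N^{3}$ times smaller than the paper's), but that factor depends only on $N$ and is harmlessly absorbed into $C_{2}$. Two small inaccuracies worth flagging: ``Cramer's rule --- equivalently, a $k$-th divided difference of $P$'' is not an equivalence --- a $k$-th divided difference recovers the leading coefficient of the degree-$k$ interpolant (a combination of all $c_{j}$, $j\ge k$), so you do need the full $(N+1)$-node Lagrange/Cramer inversion, exactly as the paper applies it to bound $P^{(k)}(0)$; and in the displayed ``crucial estimate'' the $\gg_{\sigma,N}$ is essential (with the explicit constants as printed the local approach is too lossy). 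For the tail you pair consecutive indices and use alternating phases, giving $\|\sum_{n\ge Q_{0}}\mathrm{e}(\theta_{n})v_{n}\|\ll Q_{0}^{-\sigma}(\log Q_{0})^{N}$; the paper instead sets $z_{n}=0$ for $n\ge Q_{0}$ and lets a single application of Lemma~\ref{Hilbert} over the whole range $[R,Q)$ absorb both the phasing error and the tail simultaneously --- a step shorter, but your version is fine since the required smallness follows from $Q_{0}\ge C_{1}R\ge C_{1}C_{0}\varepsilon^{4/(1-2\sigma)}$. In short: same strategy, same key lemmas, a more hands-on (and less economical) blocking for the interpolation step, and a separately argued tail; the net effect is a correct proof with larger implied constants.
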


\begin{proof}
Let  $R=R(\varepsilon,\sigma,N)$ be a positive integer which will be specified later on. We consider for every integer $Q>R$ the set of vectors
\begin{align*}
\mathcal{D}_{RQ}
:=\left\{\mathbf{z}=\left(z_R,\dots,z_{Q-1}\right):|z_n|\leq1,\,n=R,\dots,Q-1\right\}
\end{align*}
and define the functions 
\begin{align}\label{interm}
(\mathbf{z},\alpha)\longmapsto g_k(\mathbf{z},\alpha)
:=\sum\limits_{n=R}^{Q-1}z_n\dfrac{\left(-\log(n+\alpha)\right)^k}{(n+\alpha)^\sigma},
\end{align}
for every $(\mathbf{z},\alpha)\in\mathcal{D}_{RQ}\times(0,1]$ and $k=0,\dots ,N$.

First we will determine for a given vector of complex numbers $(A_0,\dots,A_{N})$ an integer $Q$ such that, for every $0<\alpha\leq1$, the system of equalities
\begin{align}\label{system}
g_k(\mathbf{z},\alpha)
=A_k,\,\,\,k=0,\dots, N,
\end{align}
has a solution $\mathbf{z}_\alpha\in\mathcal{D}_{RQ}$, that is, $(A_0,\dots,A_{N})$ belongs to the set
\begin{align*}
\mathcal{G}
:=\left\{\left(g_0(\mathbf{z},\alpha),\dots,g_{N}\left(\mathbf{z},\alpha\right)\right):\mathbf{z}\in\mathcal{D}_{RQ}\right\}.
\end{align*}
Observe that $\mathcal{G}$ is a closed convex subset of the complex Hilbert space $\mathbb{C}^{N+1}$ endowed with the inner product
\begin{align*}
\langle(x_0,\dots,x_{N}),(y_0,\dots,y_{N})\rangle
:=\sum\limits_{k=0}^{N}\Re(x_k\overline{y_k}).
\end{align*}
Thus, in view of Lemma \ref{Hil} it is sufficient to show that for sufficiently large $Q$ and for arbitrary $0<\alpha\leq1$ and non-zero $(\ell_0,\dots,\ell_{N})\in\mathbb{C}^{N+1}$, there is $\mathbf{z}\in\mathcal{D}_{RQ}$ such that
\begin{align}\label{system1}
\sum\limits_{k=0}^{N}\ell_kg_k(\mathbf{z},\alpha)
=\sum\limits_{k=0}^{N}\ell_kA_k.
\end{align}
One can see that
\begin{align}\label{disc}
\sum\limits_{k=0}^{N}\ell_kg_k(\mathcal{D}_{RQ},\alpha)
=\left\{z:|z|\leq V:=\sum\limits_{n=R}^{Q-1}\dfrac{1}{(n+\alpha)^\sigma}\left|\sum\limits_{k=0}^{N}\ell_k(-\log(n+\alpha))^k\right|\right\}.
\end{align}
Indeed, the inclusion of the set on the left-hand side in the set on the right-hand side is obvious, while if $w=|w|\mathrm{e}(\phi)$ belongs to the disc described in the right-hand side of (\ref{disc}), we can choose $\mathbf{z}\in\mathcal{D}_{RQ}$ with 
\begin{align*}
z_n
=\dfrac{|w|}{V} \mathrm{e}\left(\phi-\arg\left(\sum\limits_{m=0}^{N}\ell_m(-\log(n+\alpha))^m\right)\right)
\end{align*}
such that
\begin{align*}
\sum\limits_{k=0}^{N}\ell_kg_k(\mathbf{z},\alpha)=w.
\end{align*}
Therefore, from (\ref{system1}) and (\ref{disc}) it is sufficient to show that, for sufficiently large $Q$ and for arbitrary $0<\alpha\leq1$ and non-zero $(\ell_0,\dots,\ell_{N})\in\mathbb{C}^{N+1}$,
\begin{align}\label{cond}
\sum\limits_{k=0}^{N}|\ell_k||A_k|\leq\sum\limits_{n=R}^{Q-1}\dfrac{1}{(n+\alpha)^{\sigma}}\left|\sum\limits_{k=0}^{N}\ell_k(-\log(n+\alpha))^k\right|.
\end{align}

Now, consider the polynomial
\begin{align}\label{polynomial}
P(x):=\sum\limits_{k=0}^{N}(-1)^k\ell_kx^k,\,\,\,x\in\mathbb{R},
\end{align}
and the following partition of the interval $[\log (R+\alpha),\log Q]$
\begin{align}\label{partition}
x_k:=\log (R+\alpha)+\dfrac{k}{N}\log\dfrac{Q}{R+\alpha},\,\,\,k=0,\dots,N.
\end{align}
If we set in addition
\begin{align*}
G_k(x):=\mathop{\prod\limits_{m=0}^N}_{m\neq k}(x-x_m),\,\,\,k=0,\dots,N,
\end{align*}
then it follows that
\begin{align}\label{der}
\left|G_k^{(j)}(0)\right|\leq\mathop{\sum\limits_{m_1=0}^{N}}_{m_1\neq k}\mathop{\sum\limits_{m_2=0}^{N}}_{m_2\neq k,m_1}\dots\mathop{\sum\limits_{m_j=0}^{N}}_{m_j\neq k,m_1,\dots,m_{j-1}}\left|-x_{m_j}\right|\leq\dfrac{N!}{(N-j)!}(\log Q)^{N-j}
\end{align}
and
\begin{align}\label{poin}
\left|G_k(x_k)\right|=\mathop{\prod\limits_{m=0}^N}_{m\neq k}\left|\dfrac{k-m}{N}\log\dfrac{Q}{R+\alpha}\right|=\left(\dfrac{1}{N}\log\dfrac{Q}{R+\alpha}\right)^Nk!(N-k)!
\end{align}
for any $ j,k=0,\dots, N$. 
In view of Lagrange's interpolation theorem (see \cite[Chapter 1, Section 1,E.6]{borwein1995polynomials})
\begin{align*}
P(x)=\sum\limits_{k=0}^N\dfrac{P(x_k)}{G_k(x_k)}G_k(x),
\end{align*}
and relations \eqref{der} and \eqref{poin}, 
we obtain
\begin{align*}
j!|\ell_j|
=\left|P^{(j)}(0)\right|
&=\left|\sum\limits_{k=0}^N\dfrac{P(x_k)G^{(j)}_k(0)}{G_k(x_k)}\right|\\
&\leq\sum\limits_{k=0}^N\dfrac{|P(x_k)|N!(\log Q)^{N-j}}{k!(N-k)!(N-j)!}\left(\dfrac{N}{\log\frac{Q}{R+\alpha}}\right)^N
\end{align*}
for $j=0,\dots,N$.
Therefore,
\begin{align}\label{Good}
\dfrac{1}{N+1}\left(\dfrac{\log\frac{Q}{R+\alpha}}{2N\log Q}\right)^{N}\sum\limits_{j=0}^{N}j!|\ell_j|(N-j)!\left(\log Q\right)^j
\leq\sum\limits_{k=0}^{N}\left|P(x_k)\right|.
\end{align}

Let $y_k$, $k=1,\dots, N$, be such that $x_{k-1}\leq y_k\leq x_k$ and
\begin{align*}
|P(y_k)|=\max\limits_{x\in[x_{k-1},x_k]}|P(x)|= \max\limits_{x\in[-1,1]}\left|P\left(x\dfrac{x_k-x_{k-1}}{2}+\dfrac{x_k+x_{k-1}}{2}\right)\right|
\end{align*}
for $k=1,\dots, N$. 
Markov's inequality (see \cite[Chapter 5, Section 2, E.2]{borwein1995polynomials}) states that
$$\max\limits_{x\in[-1,1]}\left|\tilde{P}'(x)\right|\leq N^2\max\limits_{x\in[-1,1]}\left|\tilde{P}(x)\right|$$
 for any $\tilde{P}\in\mathbb{C}[X]$ of degree at most $N$.
Thus,
\begin{align}\label{Markov}
\begin{split}
\max\limits_{x\in[x_{k-1},x_k]}\left|P'(x)\right|
&=\max\limits_{x\in[-1,1]}\left|P'\left(x\dfrac{x_k-x_{k-1}}{2}+\dfrac{x_k+x_{k-1}}{2}\right)\right|\\
&=\max\limits_{x\in[-1,1]}\dfrac{2}{x_k-x_{k-1}}\left|\dfrac{\mathrm{d}}{\mathrm{d}x}P\left(x\dfrac{x_k-x_{k-1}}{2}+\dfrac{x_k+x_{k-1}}{2}\right)\right|\\
&\leq\dfrac{2N^2}{x_k-x_{k-1}}\max\limits_{x\in[-1,1]}\left|P\left(x\dfrac{x_k-x_{k-1}}{2}+\dfrac{x_k+x_{k-1}}{2}\right)\right|\\
&=\dfrac{2N^2}{x_k-x_{k-1}} |P(y_k)|
\end{split}
\end{align}
for $k=1,\dots,N$.
If we set now
\begin{align}\label{sets}
\mathcal{I}_k:=\left\{x\in[x_{k-1},x_k]:|x-y_k|\leq S:=\dfrac{\log\frac{Q}{R+\alpha}}{4N^3}\right\},\,\,\,k=1,\dots,N,
\end{align}
 then
relations \eqref{partition}, \eqref{Markov}, \eqref{sets} and the mean-value theorem imply that for every $x\in\mathcal{I}_k$ there is a $\xi_x$ between the points $x$ and $y_k$ such that
\begin{align*}
\left|P(x)\right|\geq |P(y_k)|-|P(y_k)-P(x)|=|P(y_k)|-\left|{P'(\xi_x)\left(y_k-x\right)}\right|\geq\dfrac{|P(y_k)|}{2}
\end{align*}
or
\begin{align}\label{inepo}
\max\limits_{x\in[x_{k-1},x_k]}|P(x)|=|P(y_k)|\leq 2\min\limits_{x\in[x_{k-1},x_k]k}|P(x)|
\end{align}
for $k=1,\dots, N$.
Since $$x_k-x_{k-1}=\dfrac{\log\frac{Q}{R+\alpha}}{N},\,\,\,k=1,\dots,N,$$
at least one of the intervals $[y_k-S,y_k]$ and $[y_k,y_k+S]$ is contained in $\mathcal{I}_k$.
 We denote those intervals by 
\begin{align}\label{J_k}
\mathcal{J}_k:=[c_k,c_k+S],\,\,\,k=1,\dots, N.
\end{align}
 Then, it follows from (\ref{polynomial}), (\ref{partition}), (\ref{inepo}) and (\ref{J_k}) that
\begin{align}\label{Good1}
\begin{split}
\sum\limits_{n=R}^{Q-1}\dfrac{1}{(n+\alpha)^\sigma}\left|\sum\limits_{k=0}^{N}\ell_k(-\log(n+\alpha))^k\right|&=\sum\limits_{n=R}^{Q-1}\dfrac{\left|P(\log(n+\alpha))\right|}{(n+\alpha)^\sigma}\\
&\geq\sum\limits_{k=1}^{N}\mathop{\sum\limits_{\log(n+\alpha)\in \mathcal{J}_k}}\dfrac{\left|P(\log(n+\alpha))\right|}{(n+\alpha)^{\sigma}}\\
&\geq\sum\limits_{k=1}^{N}\dfrac{|P(y_k)|}{2}\mathop{\sum\limits_{e^{c_k}\leq n+\alpha\leq e^{c_k+S}}}\dfrac{1}{(2n)^{\sigma}}.
\end{split}
\end{align}
Observe that
\begin{align*}
\mathop{\sum\limits_{e^{c_k}\leq n+\alpha\leq e^{c_k+S}}}\dfrac{1}{n^{\sigma}}\geq\left\{\begin{array}{lll}
\dfrac{e^{c_k(1-\sigma)}\left(e^{S(1-\sigma)}-1\right)}{1-\sigma}+O\left(e^{-c_k}\right)&,\sigma<1,\\
\\
\log\dfrac{e^{c_k+S}}{e^{c_k}}+O\left(e^{-c_k}\right)&,\sigma=1.
\end{array}\right.
\end{align*}
Since $c_k\geq\log R$ for $k=1,\dots,N$, the definition of $S$ yields that
\begin{align*}
\mathop{\sum\limits_{e^{c_k}\leq n+\alpha\leq e^{c_k+S}}}\dfrac{1}{n^{\sigma}}\geq\left\{\begin{array}{lll}
\dfrac{R^{1-\sigma}}{2(1-\sigma)}\left[\left(\dfrac{Q}{R+1}\right)^{(1-\sigma)/(4N^3)}-1\right]&,\sigma<1,\\
\\
\dfrac{\log\frac{Q}{R+1}}{8N^3}&,\sigma=1,
\end{array}\right.
\end{align*}
for sufficienty large  $R\gg1$and $Q\geq C_1 R$, where $C_1=C_1(\sigma,N)$. 
Recall that the right-hand side part of the latter inequality is equal to $2^{2+\sigma}\mathbf{E}(R,Q,\sigma)$. 
It follows now from relations \eqref{inepo} and \eqref{Good1} that
\begin{align}
\begin{split}
\sum\limits_{n=R}^{Q-1}\dfrac{1}{(n+\alpha)^\sigma}\left|\sum\limits_{k=0}^{N}\ell_k(-\log(n+\alpha))^k\right|
&\geq2\mathbf{E}(R,Q,\sigma)\sum\limits_{k=1}^{N}|P(y_k)|\\
&\geq\mathbf{E}(R,Q,\sigma)\sum\limits_{k=0}^{N}|P(x_k)|.
\end{split}
\end{align}
Thus, in view of relations (\ref{Good}) and (\ref{Good1}), if we choose $Q\geq C_1 R $ large enough so that the system of inequalities
\begin{align}\label{systine}
|A_k|\leq\mathbf{E}(R,Q,\sigma)\left(\dfrac{\log\frac{Q}{R+1}}{2N\log Q}\right)^{N}k!(N-k)!\left(\log Q\right)^k,\,\,\,k=0,\dots,N,
\end{align}
is satisfied, then relation (\ref{cond}) holds for arbitrary $\alpha\in(0,1]$ and any non-zero vector $(\ell_0,\dots,\ell_N)\in\mathbb{C}^{N+1}$.
Hence, for every $\alpha\in(0,1]$ the system (\ref{system}) has a solution $\mathbf{z}_\alpha\in\mathcal{D}_{RQ}$ as long as $Q\geq C_ 1R$ satisfies (\ref{systine}).

If $U_0\gg_{\sigma,N}1$ is large enough so that the functions
\begin{align}\label{U}
x\longmapsto\dfrac{\left(\log x\right)^k}{x^\sigma},\,\,\,k=0,\dots,N,
\end{align}
are decreasing in $[U_0,+\infty]$, then for every $R>U_0$, $\alpha\in[A,1]$ and $k=0,\dots,N$, we have that
\begin{align*}
\left|\sum\limits_{n=0}^{R-1}\dfrac{(-1)^n(-\log(n+\alpha))^k}{(n+\alpha)^{\sigma}}\right|
&\leq\dfrac{(-\log\alpha)^{k}}{\alpha}+\left|\sum\limits_{n=1}^{U}\dfrac{(-1)^n\left(\log(n+\alpha)\right)^k}{(n+\alpha)^{\sigma}}\right|\\
&\leq\dfrac{(-\log\alpha)^{k}}{\alpha^{\sigma}}+\max\limits_{y\in[0,1]}\left|\sum\limits_{n=1}^{U}\dfrac{(-1)^n\left(\log(n+y)\right)^k}{(n+y)^{\sigma}}\right|\\
&\leq{C_2}A^{-1/2},
\end{align*}
where $C_2=C_2(\sigma,N)\geq1$.
Therefore, if we set 
\begin{align*}
A_k=A_k(\alpha):=a_k-\sum\limits_{n=0}^{R-1}\dfrac{(-1)^n(-\log(n+\alpha))^k}{(n+\alpha)^{\sigma}},\,\,\,k=0,\dots,N,
\end{align*}
it follows from (\ref{systine}) that for every $\alpha\in[A,1]$ the system of equalities (\ref{system}) has a solution $\mathbf{z}_\alpha\in\mathcal{D}_{RQ}$ as long as $Q\geq C_1 R$ satisfies the system of inequalities
\begin{align*}
C_2\left(|a_k|+A^{-1/2}\right)
\leq\mathbf{E}(R,Q,\sigma)\left(\dfrac{\log\frac{Q}{R+1}}{2N\log Q}\right)^{N}k!(N-k)!(\log Q)^k,
\end{align*}
$k=0,\dots,N$.
Since the right-hand side of these inequalities tends to infinity as $Q\to\infty$, the system is solvable for all sufficiently large $Q$.

Let $Q_0\geq C_1 R$ be the smallest integer satisfying the aforementioned system, $Q\geq Q_0$ and $\alpha\in[A,1]$. 
Let also $\mathbf{z}_\alpha:=\left(z_n\right)_{R\leq n\leq Q_0-1}$ be an element of $\mathcal{D}_{RQ_0}$ such that
\begin{align}\label{system1.}
g_k(\mathbf{z}_\alpha,\alpha)=A_k(\alpha),\,\,\,k=0,\dots, N.
\end{align}
 From Lemma \ref{Hilbert} there are real numbers $\theta_n$, $n=R,\dots, Q-1$, such that 
\begin{align}\label{ine3}
\begin{split}
&\left|\left|\left(\sum\limits_{n=R}^{Q_0-1}z_{n}\dfrac{\left(-\log(n+\alpha)\right)^{k}}{(n+\alpha)^{\sigma}}-\sum\limits_{n=R}^{Q-1}\dfrac{\left(-\log(n+\alpha)\right)^{k}\mathrm{e}\left(\theta_{n}\right)}{(n+\alpha)^{\sigma}}\right)_{0\leq k\leq N}\right|\right|_{\mathbb{C}^{N+1}}^2\\
&\leq4\sum\limits_{n=R}^{Q-1}\left|\left|\left(\dfrac{\left(-\log(n+\alpha)\right)^{k}}{(n+\alpha)^\sigma}\right)_{0\leq k\leq N}\right|\right|_{\mathbb{C}^{N+1}}^2\\
&\leq4\sum\limits_{k=0}^{N}\sum\limits_{n=R}^{Q-1}\dfrac{\left(\log(n+1)\right)^{2k}}{n^{2\sigma}}\\
&\ll_{\sigma,N}R^{1-2\sigma}.
\end{split}
\end{align}
Let
$$R\gg_{\sigma,N}\left(U_0+\dfrac{1}{\varepsilon}\right)^{\frac{4}{2\sigma-1}}
\gg_{\sigma,N}\left(\dfrac{1}{\varepsilon}\right)^{\frac{4}{2\sigma-1}}$$
be
 sufficiently large
and set $\underline{\theta}_0=(\theta_{0n})_{0\leq n\leq Q-1}$ to be
\begin{align*}
\theta_{0n}:=\left\{\begin{array}{ll}
n/2,&0\leq n\leq R-1,\\
\theta_{n},&R\leq n\leq Q-1.
\end{array}\right.
\end{align*}
Then \eqref{interm}, \eqref{system1.} and \eqref{ine3} yield
\begin{align*}
\left|{\left.\dfrac{\partial^k}{\partial s^k}\zeta_Q\left(s,\underline{\theta}_0,\alpha\right)\right|}_{s=\sigma}-a_k\right|
&=\left|A_k(\alpha)-\sum\limits_{n=R}^{Q-1}\dfrac{\left(-\log(n+\alpha)\right)^{k}\mathrm{e}\left(\theta_{n}\right)}{(n+\alpha)^{\sigma}}\right|\\
&<\left|g_k(\mathbf{z}_\alpha,\alpha)\hspace*{-1pt}-\hspace*{-1pt}\sum\limits_{n=R}^{Q_0-1}z_{n}\dfrac{\left(-\log(n+\alpha)\right)^{k}}{(n+\alpha)^{\sigma}}\right|+\varepsilon\\
&=\varepsilon
\end{align*}
for $k=0,\dots,N$.
\end{proof}

Before proving the next theorem, we need to introduce some notations.
Let ${{\lambda:\mathbb{R}\to\mathbb{R}_+}}$
be an infinitely differentiable function with $\mathrm{supp}(\lambda)\hspace*{-1.1pt}\subseteq\left[-1,1\right]$ and $\int_{-\infty}^{+\infty}\lambda(x)\mathrm{d}x=1$. 
We also assume that $\lambda$ is bounded above by 1.
If $Q\geq2$ is an integer, we set $\delta:=Q^{-2}$ and define the function
\begin{align*}
{{\underline{\theta}\longmapsto \Lambda_Q(\underline{\theta}):=\prod\limits_{n=0}^{Q-1}\lambda\left(\dfrac{\theta_n}{\delta}\right)}},
\end{align*}
for any $\underline{\theta}=(\theta_0,\dots,\theta_{Q-1})\in\left[-1,1\right]^Q$. 
Then, $\mathrm{supp}\left(\Lambda_Q\right)\subseteq\left[-1/2,1/2\right]^Q$ and we can extend $\Lambda_Q$ onto all $\mathbb{R}^Q$ by periodicity with period 1 in each of the variables $\theta_n$, $n=0,\dots,Q-1$.
The function
 $${{\theta\longmapsto\lambda\left(\dfrac{\theta}{\delta}\right)}}$$ 
 extended to $\mathbb{R}$ by periodicity with period 1, has a Fourier expansion
\begin{align*}
{{\lambda\left(\dfrac{\theta}{\delta}\right):=\sum\limits_{n=-\infty}^{+\infty}{c}_n\mathrm{e}(n\theta)}},
\end{align*}
where 
\begin{align}\label{ineq3}
{{c_0={\delta}\,\,\,\,\,\text{and}\,\,\,\,\,
{c}_n=\int\limits_0^1\lambda\left(\dfrac{\theta}{\delta}\right)\mathrm{e}(-n\theta)\mathrm{d}\theta\ll\dfrac{1}{n^2\delta^{{2}}},\,\,\,n\in\mathbb{Z}\setminus\lbrace0\rbrace}}.
\end{align}
The last relation follows from integrating twice by parts and the implicit constant depends only on our choice of $\lambda$.
Thus, the Fourier expansion of $\Lambda_Q$ is given by
\begin{align*}
\Lambda_Q(\underline{\theta}):=\sum\limits_{\underline{m}}d_{\underline{m}}\mathrm{e}(\langle\underline{m},\underline{\theta}\rangle),
\end{align*}
where $\underline{m}=(m_0,\dots,m_{Q-1})\in\mathbb{Z}^Q$ and  $$d_{\underline{m}}:=\prod\limits_{n=0}^{Q-1}{c}_{m_n}.$$
We define for every $\underline{m}\in\mathbb{Z}^Q\setminus\lbrace\underline{0}\rbrace$ and $x\in\mathbb{R}$ the polynomials
\begin{align*}
Q_{\underline{m}}^+(x):=\mathop{\prod\limits_{n=0}^{Q-1}}_{m_n>0}(n+x)^{m_n}
\,\,\,\,\,\text{and}\,\,\,\,Q_{\underline{m}}^-(x):=\mathop{\prod\limits_{n=0}^{Q-1}}_{m_n<0}(n+x)^{-m_n}.
\end{align*}
Let $\hat{M}:=\mathbb{Z}^Q\cap[-M\,,\,M\,]^Q$ and
\begin{align}\label{poli}
\mathcal{P}(Q,M):=\left\{P_{\underline{m}}=Q_{\underline{m}}^+-Q_{\underline{m}}^-:\underline{m}\in\hat{M}\setminus\lbrace\underline{0}\rbrace\right\}.
\end{align}
Observe that $\mathcal{P}(Q,M)$ is a set of non-zero integer polynomials of degree at most $MQ$ and height bounded by a constant $\mathbf{H}(Q,M)$.
We also define the set 
\begin{align}\label{al}
\mathcal{A}(Q,M)=\mathcal{A}_1\cup\mathcal{A}_2,
\end{align} where
\begin{align*}
\mathcal{A}_1&:=\left\{\alpha\in\mathbb{A}:d(\alpha)>MQ+1\right\},
\end{align*}
\begin{align*}
\mathcal{A}_{2\underline{m}}
:=\left\{\alpha\in\mathbb{A}\setminus\mathcal{A}_1:\forall x,y\in\mathbb{N}\cap\left[0,\exp\left(2Q^2\right)\right],\,\,\dfrac{Q_{\underline{m}}^+(\alpha)}{Q_{\underline{m}}^-(\alpha)}\neq\dfrac{x+\alpha}{y+\alpha}\right\},
\end{align*}
for $\underline{m}\in\hat{M}\setminus\lbrace0\rbrace$, and
\begin{align*}
\mathcal{A}_2
:=\bigcap\limits_{\underline{m}\in\hat{M}\setminus\lbrace\underline{0}\rbrace}\mathcal{A}_{2\underline{m}}.
\end{align*}
Finally, we  consider the curve
\begin{align*}
\mathbb{R}\times(0,1]\ni(\tau,\alpha)\longmapsto\gamma_{Q}(\tau,\alpha):=\left(\dfrac{\log\left(n+\alpha\right)}{2\pi}\tau\right)_{0\leq n< Q}.
\end{align*}

\begin{theorem}\label{MAIN3}
For any $k\in\mathbb{N}_0$ and $d\geq3$, there exist positive numbers $C_3=C_3(k)$, $C_4$ and $C_5(d,k)$, such that the following is true:

Let $\varepsilon>0$, $Q\geq C_3/\varepsilon^{8}$, $M\geq C_4 \exp\left(2Q^2\right)$, $\alpha\in\mathcal{A}(Q,M)$ and $d\geq d(\alpha)+1$. 
Then there exists positive number $\nu=\nu(d,k)$, such that if 
\begin{align*}
{{T\geq C_5\max\left\{\left(\mathbf{K}\exp\left((M+2)\exp\left(Q^2\right)\right)\right)^{\frac{4d}{4(d-d(\alpha))-3}},\varepsilon^{-2\nu}\right\}}},
\end{align*}
where
\begin{align}\label{B}
\mathbf{K}=\mathbf{K}(Q,M,\alpha)
:=\left[\mathbf{H}(Q,M)\left(MQ+2\right)\right]^{d(\alpha)-1}\left[H(\alpha)(d(\alpha)+1)^{1/2}\right]^{MQ+1},
\end{align}
we have
\begin{align*}
{\left|\dfrac{1}{\delta^QT}\int\limits_{T}^{2T}\Lambda(\gamma_{Q}(\tau,\alpha)-\underline{\theta}_1)\mathrm{d}\tau-1\right|<Q^{-2}}
\end{align*} and
\begin{align}\label{bas3}
\begin{split}
\int\limits_{T}^{2T}&\Lambda_Q\left(\gamma_{Q}(\tau,\alpha)-\underline{\theta}_1\right)\left|\zeta^{(k)}\left(\sigma+i\tau;\alpha\right)-\left.\dfrac{\partial^k}{\partial s^k}\zeta_Q\left(s+i\tau,\underline{0},\alpha\right)\right|_{s=\sigma}\right|^2\mathrm{d}\tau\\
&< {\varepsilon^2}\int\limits_{T}^{2T}\Lambda_Q\left(\gamma_{Q}(\tau,\alpha)-\underline{\theta}_1\right)\mathrm{d}\tau
\end{split}
\end{align}
for any $\underline{\theta}_1\in\mathbb{R}^Q$ and $\mathbf{A}\left((d+1/(2d))^{-1}\right)\leq\sigma\leq1$.
\end{theorem}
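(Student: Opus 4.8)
The plan is to prove the two assertions of Theorem~\ref{MAIN3} by a second-moment (mean-value) computation for the Dirichlet polynomial approximation, combined with the diophantine information encoded in the set $\mathcal{A}(Q,M)$. First I would replace $\zeta^{(k)}(\sigma+i\tau;\alpha)$ by its finite sum $\sum_{0\le n\le \tau^{1/d}}(-\log(n+\alpha))^k(n+\alpha)^{-\sigma-i\tau}$ using Lemma~\ref{approx'}; the tail $\alpha^{-s}$ contributes nothing to the derivatives for $k\ge1$ and only a negligible term for $k=0$, and the error $O(t^{-\nu})$ is absorbed by the condition $T\ge C_5\varepsilon^{-2\nu}$. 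The term $\left.\tfrac{\partial^k}{\partial s^k}\zeta_Q(s+i\tau,\underline 0,\alpha)\right|_{s=\sigma}$ is exactly the truncation of this same sum to $0\le n\le Q-1$. Since $Q\ge C_3/\varepsilon^8\ge \tau^{1/d}$ will FAIL for large $\tau$ — wait, rather $\tau^{1/d}$ grows, so one must choose the truncation point of Lemma~\ref{approx'} to be $Q-1$, i.e. use the approximate functional equation at the specific length $Q$; this forces $T$ to be roughly $Q^d$, which is exactly the shape of the lower bound on $T$ in the hypothesis (via $\mathbf K$ and the exponent $\tfrac{4d}{4(d-d(\alpha))-3}$). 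So the first reduction is: on $[T,2T]$, both quantities in \eqref{bas3} differ from $\sum_{n=Q}^{\lfloor\tau^{1/d}\rfloor}(\cdots)$ by $O(t^{-\nu})$, and it remains to bound the second moment of the difference, weighted by $\Lambda_Q(\gamma_Q(\tau,\alpha)-\underline\theta_1)$.

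Next I would expand $\Lambda_Q(\gamma_Q(\tau,\alpha)-\underline\theta_1)=\sum_{\underline m} d_{\underline m}\,\mathrm e(\langle\underline m,\gamma_Q(\tau,\alpha)\rangle)\mathrm e(-\langle\underline m,\underline\theta_1\rangle)$ into its Fourier series, and similarly write $\bigl|\zeta^{(k)}-\tfrac{\partial^k}{\partial s^k}\zeta_Q\bigr|^2$ as a double Dirichlet sum over indices $m,n$ with $m$ or $n$ at least $Q$. The key observation is that $\mathrm e(\langle\underline m,\gamma_Q(\tau,\alpha)\rangle)=\bigl(Q_{\underline m}^+(\alpha)/Q_{\underline m}^-(\alpha)\bigr)^{i\tau}$, so the whole integrand becomes a sum of terms of the form $c\cdot\bigl(\tfrac{Q_{\underline m}^+(\alpha)}{Q_{\underline m}^-(\alpha)}\cdot\tfrac{y+\alpha}{x+\alpha}\bigr)^{i\tau}$ with various coefficients. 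Integrating over $[T,2T]$: the diagonal terms, where $\tfrac{Q_{\underline m}^+(\alpha)}{Q_{\underline m}^-(\alpha)}=\tfrac{x+\alpha}{y+\alpha}$, give the main contribution; the off-diagonal terms give $\ll T/\bigl|\log(\cdots)\bigr|$. For the truncation to $|m_i|\le M$ I would use that the Fourier coefficients $d_{\underline m}$ decay like $\prod 1/(m_n^2\delta^2)$ (from \eqref{ineq3}), so the tail $\underline m\notin\hat M$ contributes $\ll \delta^Q\cdot(\text{something small})$ once $M\ge C_4\exp(2Q^2)$; this is where the stated size of $M$ enters. Now comes the heart of the matter: for $\alpha\in\mathcal{A}(Q,M)$, the definition of $\mathcal{A}_{2\underline m}$ says that $\tfrac{Q_{\underline m}^+(\alpha)}{Q_{\underline m}^-(\alpha)}\ne\tfrac{x+\alpha}{y+\alpha}$ for all $x,y\le\exp(2Q^2)$ and all relevant $\underline m$ (and $\alpha\in\mathcal{A}_1$ handles the case of large degree trivially since then $P_{\underline m}(\alpha)\ne0$ by degree reasons) — so there are NO unwanted diagonal terms beyond the genuinely trivial $\underline m=\underline 0$, $x=y$ ones; the would-be resonances are killed.

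For the off-diagonal (and surviving near-diagonal) terms I would invoke the Liouville-type estimate Lemma~\ref{pol}: the quantity $\bigl|\log\bigl(\tfrac{Q_{\underline m}^+(\alpha)(y+\alpha)}{Q_{\underline m}^-(\alpha)(x+\alpha)}\bigr)\bigr|$ is, by Lemma~\ref{log}, at least $|P(\alpha)|$ divided by a quantity comparable to $\max$ of the two products, where $P$ is the integer polynomial $Q_{\underline m}^+(X)(y+X)-Q_{\underline m}^-(X)(x+X)$ of degree $\le MQ+1$ and height $\ll \mathbf H(Q,M)(MQ+2)\cdot\max\{x,y\}$; and $P(\alpha)\ne0$ precisely because $\alpha\in\mathcal{A}(Q,M)$. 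Lemma~\ref{pol} then bounds $|P(\alpha)|$ from below by $\mathbf K(Q,M,\alpha)^{-1}$ up to factors depending on $x,y\le T^{1/d}$, which after bookkeeping yields $|P(\alpha)|\gg (\mathbf K\,T^{1/d}\cdots)^{-c}$ for an explicit $c$. Summing the off-diagonal contributions, each $\ll T\cdot|P(\alpha)|^{-1}$, over all $\underline m\in\hat M$ and all pairs $x,y\le T^{1/d}$ — there are at most $\exp((M+2)\exp(Q^2))$-ish of them once one also uses that only $x,y\le\exp(2Q^2)$ matter for the near-diagonal and a cruder count for the rest — gives a total off-diagonal bound of the form $\mathbf K\exp((M+2)\exp(Q^2))\cdot T^{1-1/(cd)}$ roughly, which is $o(\delta^Q T)$ precisely when $T\ge C_5\bigl(\mathbf K\exp((M+2)\exp(Q^2))\bigr)^{4d/(4(d-d(\alpha))-3)}$: the exponent $\tfrac{4d}{4(d-d(\alpha))-3}$ is exactly what makes the power of $T$ saved beat the power of $T$ lost, using $d\ge d(\alpha)+1$ so that $4(d-d(\alpha))-3\ge1$. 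Meanwhile the diagonal $\underline m=\underline 0$ contributes $\delta^Q T(1+o(1))$ to $\int\Lambda_Q$, giving the first assertion $\bigl|\tfrac{1}{\delta^QT}\int\Lambda_Q-1\bigr|<Q^{-2}$, and $\delta^Q T\cdot\sum_{n\ge Q}(\log n)^{2k}n^{-2\sigma}$ — which by Lemma~\ref{stand} (applied with the exponent $2\sigma>1$ near $\sigma=1$, or rather a direct tail estimate) is $\ll \delta^Q T\cdot Q^{1-2\sigma}(\log Q)^{2k}$, and this is $<\varepsilon^2\delta^QT$ once $Q\ge C_3/\varepsilon^8$ — to the left side of \eqref{bas3}. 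I expect the main obstacle to be the combinatorial/quantitative bookkeeping that makes the exponent $\tfrac{4d}{4(d-d(\alpha))-3}$ come out exactly right: one must simultaneously control the number of off-diagonal terms (which involves counting lattice points $\underline m\in\hat M$ and pairs $x,y\lesssim T^{1/d}$), the height of the polynomials $P$ entering Lemma~\ref{pol} (which itself grows with $T^{1/d}$), and the power of $T$ saved from each $|\log(\cdots)|^{-1}\ll|P(\alpha)|^{-1}$, and balance all of these against the $\delta^QT=Q^{-2Q}T$ from the diagonal; getting the arithmetic of these competing powers of $T$ to close is the delicate point, and is where the hypotheses $d\ge d(\alpha)+1$, \eqref{degree}, and the precise lower bound \eqref{BBB} on $T$ are all consumed.
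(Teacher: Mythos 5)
Your proposal follows essentially the same route as the paper: truncate $\zeta^{(k)}$ via the approximate functional equation (Lemma~\ref{approx'}), expand $\Lambda_Q$ into its Fourier series and truncate to $\hat M$ using the coefficient decay \eqref{ineq3}, interchange integration and summation, treat the diagonal $\underline m=\underline 0$ and off-diagonal modes separately, and invoke the G\"uting--Liouville bound (Lemma~\ref{pol}) through Lemma~\ref{log} to control the resulting oscillatory integrals. The exponent bookkeeping with $p(2T)^{3/4+d(\alpha)}$ against $\delta^Q T$ and the role of $d\ge d(\alpha)+1$ in keeping $4(d-d(\alpha))-3\ge1$ are all as in the paper.

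There is, however, one genuine gap, at exactly the spot you flagged as the ``heart of the matter.'' You assert that for $\alpha\in\mathcal{A}(Q,M)$ the diagonal resonances are ``killed,'' i.e.\ that $Q_{\underline m}^+(\alpha)(y+\alpha)\ne Q_{\underline m}^-(\alpha)(x+\alpha)$ for all relevant $x,y$, so that $P(\alpha)\ne 0$ ``precisely because $\alpha\in\mathcal{A}(Q,M)$.'' This is false for $\alpha\in\mathcal{A}_2$: the definition of $\mathcal{A}_{2\underline m}$ only forbids the resonance equation $Q_{\underline m}^+(\alpha)/Q_{\underline m}^-(\alpha)=(x+\alpha)/(y+\alpha)$ for $x,y\in\mathbb{N}\cap[0,\exp(2Q^2)]$, whereas in the sum $S_{2\underline m}$ the indices $n_1,n_2$ range up to $p(2T)=\lfloor (2T)^{1/d}\rfloor$, which is far larger. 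So a resonant pair \emph{can} exist, the polynomial $Q_{\underline m}^+(X)(n_2+X)-Q_{\underline m}^-(X)(n_1+X)$ \emph{can} vanish at $\alpha$, and Lemma~\ref{pol} cannot be applied to it. The paper handles this by a separate argument: by the irrationality of $\alpha$, each $\underline m$ admits at most one solution $(x_{\underline m},y_{\underline m})$ of the resonance equation with $x_{\underline m}\ne y_{\underline m}$; both coordinates are $\geq Q$ and (by membership in $\mathcal{A}_{2\underline m}$) at least one exceeds $\exp(2Q^2)$, so the resulting genuine diagonal term contributes $\ll T/(\exp(Q^2)Q^{1/2})$, small enough to be absorbed. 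Your vague ``cruder count for the rest'' in the next sentence doesn't recover this — one needs the uniqueness of the resonant pair and the size lower bound from $\mathcal{A}_{2\underline m}$, not a cardinality estimate of the index set. Without this, the claimed bound on $S_{2\underline m}$ does not follow for $\alpha\in\mathcal{A}_2$. A secondary inaccuracy: you write each off-diagonal term is $\ll T\cdot|P(\alpha)|^{-1}$, but the saving from integrating the oscillatory factor over $[T,2T]$ is precisely the removal of the factor $T$; the bound is $\ll |P(\alpha)|^{-1}\max\{Q^+_{\underline m}(\alpha)\tilde n_2,Q^-_{\underline m}(\alpha)\tilde n_1\}$ with no $T$ in front.
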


\begin{proof}
First, we will show that 
\begin{align*}
\left|\dfrac{1}{\delta^QT}\int\limits_{T}^{2T}\Lambda_Q(\gamma_{Q}(\tau,\alpha)-\underline{\theta}_1)\mathrm{d}\tau-1\right|< Q^{-2}
\end{align*}
for suitable $Q$, $\alpha$, $T$ and any $\underline{\theta}_1\in\mathbb{R}^Q$.
The Fourier expansion of the function
\begin{align*}
\underline{\theta}\longmapsto \Lambda_Q(\underline{\theta}-\underline{\theta}_1)
\end{align*} 
is given by
\begin{align*}
\Lambda_Q(\underline{\theta}-\underline{\theta}_1):=\sum\limits_{\underline{m}}h_{\underline{m}}\mathrm{e}(\langle\underline{m},\underline{\theta}\rangle),
\end{align*}
where
$
h_{\underline{0}}:=\delta^Q
$
and
$h_{\underline{m}}:=d_{\underline{m}}\mathrm{e}(\langle\underline{m},-\underline{\theta}_1\rangle)
$,
$\underline{m}\in\mathbb{Z}^Q.$
For $M\in\mathbb{N}$,
\begin{align}\label{tri ineq}
\left|\sum\limits_{\underline{m}\notin\hat{M}}h_{\underline{m}}\mathrm{e}(\langle\underline{m},\underline{\theta}\rangle)\right|\leq\sum\limits_{\underline{m}\notin\hat{M}}\left|h_{\underline{m}}\right|
\leq Q\left(\sum\limits_{|n|>M}|{c}_n|\right)\left(\sum\limits_{n=-\infty}^{+\infty}|{c}_n|\right)^{Q-1}.
\end{align}
From (\ref{ineq3}) we know that
\begin{align}\label{coineq}
\sum\limits_{|n|>M}|{c}_n|\ll\dfrac{1}{\delta^{{2}}M}\,\,\,\,\,\text{ and }\,\,\,\,\,\sum\limits_{n=-\infty}^{+\infty}|{c}_n|\leq\left(\dfrac{A}{\delta}\right)^{{2}},
\end{align}
where $A>1$ is an absolute costant.
Therefore, from (\ref{tri ineq}) we conclude that 
\begin{align}\label{tri ineq 2}
\Lambda_Q(\underline{\theta}-\underline{\theta}_1)
=\sum\limits_{\underline{m}\in\hat{M}}h_{\underline{m}}\mathrm{e}(\langle\underline{m},\underline{\theta}\rangle)+O\left(\dfrac{Q}{M}\left(\dfrac{A}{\delta}\right)^{{{2}}Q}\right).
\end{align}
Observe that by $\delta= Q^{-2}$ we have
\begin{align*}
Q\left(\dfrac{A}{\delta}\right)^{{{2}}Q}\leq{{Q\delta^Q\left(\dfrac{A}{\delta}\right)^{{{3}}Q}\leq \delta^QQ\left(AQ\right)^{{{6}}Q}\ll\delta^Q\exp\left(Q^2\right).}}
\end{align*}
Hence, relation (\ref{tri ineq 2}) can be written as
\begin{align}\label{LL}
\Lambda_Q(\underline{\theta}-\underline{\theta}_1)=\sum\limits_{\underline{m}\in\hat{M}}h_{\underline{m}}\mathrm{e}(\langle\underline{m},\underline{\theta}\rangle)+O\left(\dfrac{\delta^Q\exp\left(Q^2\right)}{M}\right).
\end{align}
In the sequel we use the notations
\begin{align*}
\ell(\tau):=\Lambda_Q\left(\gamma_Q(\tau,\alpha)-\underline{\theta}_1\right)\,\,\,\,\,\text{and}\,\,\,\,\,\tilde{n}:=n+\alpha
\end{align*}
in order to avoid extensive expressions.
In view of (\ref{LL}), we have
{{\begin{align*}
\int\limits_{T}^{2T}\ell(\tau)\mathrm{d}\tau
=&\,h_{\underline{0}}T+\sum\limits_{\underline{m}\in\hat{M}\setminus\lbrace\underline{0}\rbrace}h_{\underline{m}}\int\limits_{T}^{2T}\mathrm{e}(\langle\underline{m},\gamma_{Q}(\tau,\alpha)\rangle)+O\left(\dfrac{T\delta^Q\exp\left(Q^2\right)}{M}\right),
\end{align*}}}
or
{{\begin{align}
\begin{split}\label{L1}
\dfrac{1}{\delta^QT}\int\limits_{T}^{2T}\ell(\tau)\mathrm{d}\tau
\hspace*{-0.5pt}=\hspace*{-0.5pt}1+\dfrac{1}{\delta^QT}\sum\limits_{\underline{m}\in\hat{M}\setminus\lbrace\underline{0}\rbrace}h_{\underline{m}}
\int\limits_{T}^{2T}\left(\dfrac{Q^+_{\underline{m}}(\alpha)}{Q^-_{\underline{m}}(\alpha)}\right)^{i\tau}\mathrm{d}\tau+O\left(\dfrac{\exp\left(Q^2\right)}{M}\right).
\end{split}
\end{align}}}
It follows from the definition of $h_{\underline{m}}$ and (\ref{coineq}) that
\begin{align}\label{coef}
\sum\limits_{\underline{m}}|h_{\underline{m}}|
\leq\left(\dfrac{A}{\delta}\right)^{2Q}\leq\delta^Q\left(AQ\right)^{6Q}
\ll \delta^Q\exp\left(Q^2\right).
\end{align}
It also follows from  (\ref{poli}) and (\ref{al}) that if $\underline{m}\in\hat{M}\setminus\lbrace\underline{0}\rbrace$ and $\alpha\in\mathcal{A}(Q,M)$, then $P_{\underline{m}}(\alpha)=Q^+_{\underline{m}}(\alpha)-Q^-_{\underline{m}}(\alpha)\neq0$. 
Thus,  it follows from Lemma \ref{log} that
\begin{align}\label{dif}
\int\limits_{T}^{2T}\left(\dfrac{Q^+_{\underline{m}}(\alpha)}{Q^-_{\underline{m}}(\alpha)}\right)^{i\tau}\mathrm{d}\tau\ll\left|\log\dfrac{Q^+_{\underline{m}}(\alpha)}{Q^-_{\underline{m}}(\alpha)}\right|^{-1}\ll\dfrac{\max\lbrace Q^+_{\underline{m}}(\alpha),Q^-_{\underline{m}}(\alpha)\rbrace}{\left|Q^+_{\underline{m}}(\alpha)-Q^-_{\underline{m}}(\alpha)\right|}.
\end{align}
Now Lemma \ref{pol} yields that, for every $\underline{m}\in\hat{M}\setminus\lbrace\underline{0}\rbrace$ and $\alpha\in\mathcal{A}(Q,M)$,
\begin{align}\label{low}
\begin{split}
\left|Q^+_{\underline{m}}(\alpha)-Q^-_{\underline{m}}(\alpha)\right|
&\geq\left[\mathbf{H}(Q,M)\left(MQ+1\right)\right]^{1-d(\alpha)}\left[H(\alpha)(d(\alpha)+1)^{1/2}\right]^{-MQ}\\
&>\mathbf{K}^{-1}.
\end{split}
\end{align}
Along with the estimate
\begin{align}\label{exp}
\max\lbrace Q^+_{\underline{m}}(\alpha),Q^-_{\underline{m}}(\alpha)\rbrace\ll{\prod\limits_{n=1}^{Q}}n^M\ll\exp\left(MQ^2\right),
\end{align}
we conclude from (\ref{L1})-(\ref{exp}) that
\begin{align*}
\dfrac{1}{\delta^QT}\int\limits_{T}^{2T}\ell(\tau)\mathrm{d}\tau-1
\ll\dfrac{\exp\left(Q^2\right)}{M}+\dfrac{\mathbf{K}\exp\left((M+1)Q^2\right)}{T}.
\end{align*}
For {{$Q\gg1$, $M\gg\exp\left(2Q^2\right)$, $\alpha\in\mathcal{A}(Q,M)$ and $T\gg\mathbf{K}\exp\left((M+2)Q^2\right)$}}, with suitable constants in $\gg$, we obtain 
\begin{align}\label{bas2}
\left|\dfrac{1}{\delta^QT}\int\limits_{T}^{2T}\Lambda_Q(\gamma_{Q}(\tau,\alpha)-\underline{\theta}_1)\mathrm{d}\tau-1\right|< Q^{-2}
\end{align}

We proceed now with the proof of relation (\ref{bas3}). 
Let $I$ denote the left-hand side of  (\ref{bas3}). 
Let also $\alpha\in\mathcal{A}(Q,M)$ and $d\geq d(\alpha)+1$. 
It follows from Lemma \ref{approx'} that there exists a positive number $\nu=\nu(d,k)$, such that
\begin{align*}
\zeta^{(k)}(s;\alpha)=\sum\limits_{n=0}^{\lfloor t^{1/d}\rfloor}\dfrac{\left(-\log (n+\alpha)\right)^k}{(n+\alpha)^s}+O_{d,k}\left(t^{-\nu}\right),\,\,\,\,t\geq t_1>0,
\end{align*}
uniformly in $\mathbf{A}\left(\left(d+1/(2d)\right)^{-1}\right)\leq\sigma\leq1$ and $0<\alpha\leq1$.
By substituting this approximate functional equation in $I$ for sufficiently large $
T\gg_{d} Q$, and by setting $p(\tau):=\left\lfloor \tau^{1/d}\right\rfloor$, it follows
 that
$
I\ll{I}_1+{I}_2,
$
where
\begin{align*}
{I}_1=\int\limits_{T}^{2T}\ell(\tau)
\left|\sum\limits_{n=Q}^{p(\tau)}\dfrac{(-\log\tilde{n})^k}{\tilde{n}^{\sigma+i\tau}}\right|^2\mathrm{d}\tau
\,\,\,\,\text{ and }\,\,\,\,
{I}_2=\int\limits_{T}^{2T}\ell(\tau)\,O_{d,k}\left(\tau^{-\nu}\right)\mathrm{d}\tau.
\end{align*}
Thus, it suffices to prove the theorem for ${I}_1$ and ${I}_2$.
We start by estimating ${I}_1$:
\begin{align}\label{series est.}
\begin{split}
{I}_1
\leq&\left(\delta^Q+O\left(\dfrac{\delta^Q\exp\left(Q^2\right)}{M}\right)\right)\int\limits_{T}^{2T}\left|\sum\limits_{n=Q}^{p(\tau)}\dfrac{(-\log\tilde{n})^k}{\tilde{n}^{\sigma+i\tau}}\right|^2\mathrm{d}\tau+\\
&+\left|\sum\limits_{\underline{m}\in\hat{M}\setminus\lbrace\underline{0}\rbrace}h_{\underline{m}}\int\limits_{T}^{2T}\mathrm{e}\left(\langle\underline{m},\gamma_Q(\tau,\alpha)\rangle\right)
\left|\sum\limits_{n=Q}^{p(\tau)}\dfrac{(-\log\tilde{n})^k}{\tilde{n}^{\sigma+i\tau}}\right|^2\mathrm{d}\tau\right|\\
\leq&\,\left[\delta^Q+O\left(\dfrac{\delta^Q\exp\left(Q^2\right)}{M}\right)\right]S_1+\sum\limits_{\underline{m}\in\hat{M}\setminus\lbrace\underline{0}\rbrace}\left|h_{\underline{m}}\right|\left|S_{2\underline{m}}\right|.
\end{split}
\end{align}
We estimate each of the terms on the right-hand side of (\ref{series est.}) seperately. By interchanging integration and summation we obtain 
\begin{align*}
S_1=\sum\limits_{n=Q}^{p(2T)}\dfrac{\left(\log\tilde{n}\right)^{2k}}{\tilde{n}^{2\sigma}}\int\limits_{T_1}^{2T}\mathrm{d}\tau
+ \sum\limits_{Q\leq n_1\neq n_2\leq p(2T)}\dfrac{(\log\tilde{n}_1\log\tilde{n}_2)^k}{\tilde{n}_1^{\sigma}\tilde{n}_2^{\sigma}}\int\limits_{T_2}^{2T}
\left(\dfrac{\tilde{n}_2}{\tilde{n}_1}\right)^{i\tau}\mathrm{d}\tau,
\end{align*}
where $T_1=\max\left\{T,\tilde{n}^d\right\}$ and $T_2=\max\left\{T,\tilde{n}_1^d,\tilde{n}_2^d\right\}$.
Since $\alpha\in\left(0,1\right]$, $d\geq3$ and $\sigma\geq \mathbf{A}(1/(d+1/(2d)))>3/4$, we get
\begin{align}\label{bas1}
\sum\limits_{n=Q}^{p(2T)}\dfrac{\left(\log\tilde{n}\right)^{2k}}{\tilde{n}^{2\sigma}}
\ll_k\sum\limits_{n=Q}^{\infty}\dfrac{\left(\log n\right)^{2k}}{n^{3/2}}
\ll_k Q^{-1/2}\left(\log Q\right)^{2k}\ll_kQ^{-1/4}
\end{align}
and
\begin{align}\label{bas.2}
\sum\limits_{Q\leq n_1\neq n_2\leq p(2T)}\dfrac{(\log\tilde{n}_1\log\tilde{n}_2)^k}{\tilde{n}_1^{\sigma}\tilde{n}_2^{\sigma}}
\ll_k \sum\limits_{Q\leq n_1\neq n_2\leq p(2T)}\dfrac{\left(\log p(2T)\right)^{2k}}{\left(\tilde{n}_1\tilde{n}_2\right)^{3/4}}.
\end{align}
Therefore,
\begin{align}
S_1
&\ll_k \,Q^{-1/4}T+\sum\limits_{Q\leq n_1\neq n_2\leq p(2T)}\dfrac{\left(\log p(2T)\right)^{2k}}{\left(\tilde{n}_1\tilde{n}_2\right)^{3/4}}\left|\int\limits_{T_2}^{2T}
\left(\dfrac{\tilde{n}_2}{\tilde{n}_1}\right)^{i\tau}\mathrm{d}\tau\right|\tag*{}\\
&\ll_k\,Q^{-1/4}T+\left(\log p(2T)\right)^{2k}\sum\limits_{Q\leq n_1\neq n_2\leq p(2T)}\dfrac{1}{\left(\tilde{n}_1\tilde{n}_2\right)^{3/4}}\left|\log\dfrac{\tilde{n}_2}{\tilde{n}_1}\right|^{-1}\tag*{}\\
&\ll_k\,Q^{-1/4}T+p(2T)^{1/2}\left(\log p(2T)\right)^{1+2k}\tag*{}\\
\label{S_1}&\ll_k\,Q^{-1/4}T+p(2T)^{3/4}.
\end{align}
For the second sum we have by interchanging integration and summation 
\begin{align}
S_{2\underline{m}}
=&\sum\limits_{n=Q}^{p(2T)}\dfrac{\left(\log \tilde{n}\right)^{2k}}{\tilde{n}^{2\sigma}}\int\limits_{T_1}^{2T}\left(\dfrac{Q^+_{\underline{m}}(\alpha)}{Q^-_{\underline{m}}(\alpha)}\right)^{i\tau}\mathrm{d}\tau\,+\tag*{}\\
\label{S_2.}&+ \sum\limits_{Q\leq n_1\neq n_2\leq p(2T)}\dfrac{(\log\tilde{n}_1\log\tilde{n}_2)^k}{\tilde{n}_1^{\sigma}\tilde{n}_2^{\sigma}}\int\limits_{T_2}^{2T}
\left(\dfrac{Q^+_{\underline{m}}(\alpha)\tilde{n}_2}{Q^-_{\underline{m}}(\alpha)\tilde{n}_1}\right)^{i\tau}\mathrm{d}\tau.
\end{align}
Here we consider two subcases, depending on whether $\alpha\in\mathcal{A}_1$ or $\alpha\in\mathcal{A}_2$.
It follows from the definitions in (\ref{poli}) and (\ref{al}) that, if  $\underline{m}\in\hat{M}\setminus\lbrace\underline{0}\rbrace$ and $\alpha\in\mathcal{A}_1$, then
 \begin{align}\label{Q con}Q^+_{\underline{m}}(\alpha)-Q^-_{\underline{m}}(\alpha)\neq0\,\,\,\text{ and }\,\,\,Q^+_{\underline{m}}(\alpha)\tilde{n}_2-Q^-_{\underline{m}}(\alpha)\tilde{n}_1\neq0.
 \end{align}  
Thus, applying Lemma \ref{pol}, it follows similar as in (\ref{dif})-\eqref{exp} that
\begin{align}\label{1}
\int\limits_{T_1}^{2T}\left(\dfrac{Q^+_{\underline{m}}(\alpha)}{Q^-_{\underline{m}}(\alpha)}\right)^{i\tau}\mathrm{d}\tau\ll\mathbf{K}\exp\left(MQ^2\right)
\end{align}
and
\begin{align}\label{2}
\begin{split}
\int\limits_{T_1}^{2T}\left(\dfrac{Q^+_{\underline{m}}(\alpha)\tilde{n}_2}{Q^-_{\underline{m}}(\alpha)\tilde{n}_1}\right)^{i\tau}\mathrm{d}\tau
&\ll\dfrac{\max\lbrace Q^+_{\underline{m}}(\alpha)\tilde{n}_2,Q^-_{\underline{m}}(\alpha)\tilde{n}_1\rbrace}{\left|Q^+_{\underline{m}}(\alpha)\tilde{n}_2-Q^-_{\underline{m}}(\alpha)\tilde{n}_1\right|}\\
&\ll\mathbf{K}\,p(2T)^{d(\alpha)}\exp\left(MQ^2\right).
\end{split}
\end{align}
From relations (\ref{bas1}), (\ref{bas.2}), (\ref{S_2.}), (\ref{1}) and  (\ref{2}) we obtain 
\begin{align}
\begin{split}
\label{S_2}
S_{2\underline{m}}
&\ll_k\left(Q^{-1/4}+p(2T)^{1/2+d(\alpha)}\left(\log p(2T)\right)^{2k}\right) \mathbf{K}\exp\left(MQ^2\right)\\
&\ll_k\left(Q^{-1/4}+p(2T)^{3/4+d(\alpha)}\right)\mathbf{K}\exp\left(MQ^2\right).
\end{split}
\end{align}
If now $\alpha\in\mathcal{A}_2$, then the second condition of relation \eqref{Q con} may not be satisfied.
 However, by the construction of the set $\mathcal{A}_2$ this can not happen too often. 
 Indeed, for every $\underline{m}\in\hat{M}\setminus\lbrace\underline{0}\rbrace$, the equation
$$\dfrac{Q_{\underline{m}}^+(\alpha)}{Q_{\underline{m}}^-(\alpha)}=\dfrac{x+\alpha}{y+\alpha}$$
has at most one solution in the positive integers, $\left(x_{\underline{m}},y_{\underline{m}}\right)$ say, with $x_{\underline{m}}\neq y_{\underline{m}}$, as follows from the irrationality of $\alpha$. 
In case such a solution does not exist in the set $\left(\mathbb{N}\cap[Q,+\infty)\right)^2$, the estimate for $S_{2\underline{m}}$ is the same as in \eqref{S_2}. 
If it exists, then
we have to add in \eqref{S_2} the term 
$$\dfrac{\left(\log\left(x_{\underline{m}}+\alpha\right)\right)^k\left(\log\left(y_{\underline{m}}+\alpha\right)\right)^k}{\left(x_{\underline{m}}+\alpha\right)^\sigma\left(y_{\underline{m}}+\alpha\right)^\sigma}T,$$
where $x_{\underline{m}}$ and $y_{\underline{m}}$ are both greater than $Q$ and at least one of them is greater than $\exp\left(2Q^2\right)$. Therefore, for sufficiently large $Q\gg_k1$, the additional term is bounded above by
$$\dfrac{T}{\exp\left(Q^2\right)Q^{1/2}}.$$
In view of the preceding and  (\ref{coef}), (\ref{series est.}), (\ref{S_1}) and (\ref{S_2}), we conclude that
\begin{align*}
{I}_1\ll_k&\left[\delta^Q+O\left(\dfrac{\delta^Q\exp\left(Q^2\right)}{M}\right)\right]\left(Q^{-1/4}T+p(2T)^{3/4}\right)+\\
&+\delta^Q\exp\left(Q^2\right)\left[\left(Q^{-1/4}
+p(2T)^{3/4+d(\alpha)}\right)\mathbf{K}\exp\left(MQ^2\right)+\dfrac{T}{\exp\left(Q^2\right)Q^{1/2}}\right]
\end{align*}
or
\begin{align}\label{I_1}
\begin{split}
{I}_1
\ll_k&\,\,Q^{-1/4}\left[2+\dfrac{\exp\left(Q^2\right)}{M}+\dfrac{\mathbf{K}\exp\left((M+1)Q^2\right)}{T}\right]\delta^QT+\\
&+\left[1+\dfrac{\exp\left(Q^2\right)}{M}+\mathbf{K}\exp\left((M+1)Q^2\right)\right] \delta^Qp(2T)^{3/4+d(\alpha)}.
\end{split}
\end{align}
Observe that
\begin{align*}
p(2T)^{3/4+d(\alpha)}\ll_{d,k}T^{\frac{3+4(d(\alpha)-d)}{4d}}{{T}}.
\end{align*} 
Then, for $
Q\gg_{k}1/\varepsilon^8,
$
$M\gg\exp\left(2Q^2\right)$,
$\alpha\in\mathcal\mathcal{A}(Q,M)$, $ d\geq d(\alpha)+1$ and 
\begin{align}\label{T}T\gg_{d,k}\left(\mathbf{K}\exp\left((M+2)Q^2\right)\right)^{\frac{4d}{4(d-d(\alpha))-3}},
\end{align} with suitable constants in $\gg$, we deduce from (\ref{bas2}) and (\ref{I_1}) that
\begin{align}\label{I1}
{I}_1<\dfrac{\varepsilon^2}{2}\int\limits_{T}^{2T}\Lambda_Q\left(\gamma_{Q}(\tau,\alpha)-\underline{\theta}_1\right)\mathrm{d}\tau
\end{align}
for every $\mathbf{A}\left(\left(d+1/(2d)\right)^{-1}\right)\leq\sigma\leq1$ and $\underline{\theta}_1\in\mathbb{R}^Q$.

Finally, we estimate ${I}_2$ by
\begin{align}\label{I2}
{I}_2\ll_{d,k}T^{-\nu}\int\limits_{T}^{2T}\Lambda_Q\left(\gamma_{Q}(\tau,\alpha)-\underline{\theta}_1\right)\mathrm{d}\tau.
\end{align}
The theorem now follows from  (\ref{T})-(\ref{I2}).
\end{proof}

\section{Proofs of Theorem \ref{weak tran} and Theorem \ref{weak.}}

\begin{proof}[Proof of Theorem \ref{weak tran}]
Let $\sigma$, $N$, $A$, $\varepsilon$, $\textbf{a}$, $R$ and $Q_0$ be as in
Lemma \ref{MAIN1}. Then, for every $Q\geq Q_0$ and $\alpha\in[A,1]$,
the system of inequalities
\begin{align*}
\left|{\left.\dfrac{\partial^k}{\partial s^k}\zeta_Q\left(s,\underline{\theta},\alpha\right)\right|}_{s=\sigma}-a_k\right|<\dfrac{\varepsilon}{4},\,\,\,\,\,\,\,\,\,k=0,\dots ,N,
\end{align*}
has a solution $\underline{\theta}_0=\underline{\theta}_0(\alpha)$.
If we take $\delta=Q^{-2}$,
then  the inequality
\begin{align}\label{delta}
|\theta_n-\theta_{0n}|\leq\delta
\end{align}
implies that
\begin{align*}
\left|{\left.\dfrac{\partial^k}{\partial s^k}\left(\zeta_Q\left(s,\underline{\theta},\alpha\right)-\zeta_Q\left(s,\underline{\theta}_0,\alpha\right)\right)\right|}_{s=\sigma}\right|
&\leq\sum\limits_{n=0}^{Q-1}\dfrac{\left(\log(n+\alpha)\right)^k\left|\mathrm{e}(\theta_n)-\mathrm{e}(\theta_{0n})\right|}{(n+\alpha)^{\sigma}}\tag*{}\\
&\ll\dfrac{1}{\alpha^{\sigma}}\delta Q\log^NQ\tag*{}\\
&\ll A^{-1/2}Q^{-1}\log^NQ\tag*{}\\
&\ll_{N,A}Q^{-1/2}
\end{align*}
for $k=0,\dots,N$.
Thus, the system of inequalities
\begin{align}\label{syst}
\left|{\left.\dfrac{\partial^k}{\partial s^k}\zeta_Q\left(s,\underline{\theta},\alpha\right)\right|}_{s=\sigma}-a_k\right|<\frac{\varepsilon}{2}\,{{<\left(2\dfrac{Q^{2}+1}{Q^{2}-1}\right)^{1/2}\dfrac{\varepsilon}{2}}},\,\,\,\,\,\,\,\,\,k=0,\dots, N,
\end{align}
is satisfied whenever $\alpha\in[A,1]$, $Q\gg_{N,A}Q_0+1/\varepsilon^4$ and (\ref{delta}) holds.
On the other hand Lemma \ref{MAIN3} yields, for every $Q\geq C_3(N)/\varepsilon^8$, $M\geq C_4\exp\left(2Q^2\right)$, $\alpha\in\mathcal{A}(Q,M)\cap[A,1]$ and $d\geq d(\alpha)+1$, the existence of a positive number $\nu(d,N)$ such that, for every
\begin{align*}
{{T\geq C_5(d,N)\max\left\{\left(\mathbf{K}\exp\left((M+2)\exp\left(Q^2\right)\right)\right)^{\frac{4d}{4(d-d(\alpha))-3}},\varepsilon^{-2\nu}\right\}}},
\end{align*}
we have
{{\begin{align}\label{L_q}
\int\limits_{T}^{2T}\Lambda_Q(\gamma_{Q}(\tau,\alpha)-\underline{\theta}_0)\mathrm{d}\tau
\geq \delta^Q\left(1-Q^{-2}\right)
T
\end{align} }}
and
\begin{align}\label{last}
\begin{split}
&\sum\limits_{k=0}^N\int\limits_{T}^{{2T}}\Lambda_Q\left(\gamma_{Q}(\tau,\alpha)-\underline{\theta}_0\right)\left|\zeta^{(k)}\left(\sigma+i\tau;\alpha\right)-\left.\dfrac{\partial^k}{\partial s^k}\zeta_Q\left(s+i\tau,\underline{0},\alpha\right)\right|_{s=\sigma}\right|^2\mathrm{d}\tau\\
&< \sum\limits_{k=0}^N\dfrac{\varepsilon^2}{4(N+1)}\int\limits_{T}^{{2T}}\Lambda_Q(\gamma_{Q}(\tau,\alpha)-\underline{\theta}_0)\mathrm{d}\tau\\
&=\dfrac{\varepsilon^2}{4}\int\limits_{T}^{{2T}}\Lambda_Q(\gamma_{Q}(\tau,\alpha)-\underline{\theta}_0)\mathrm{d}\tau\\
&<\frac{\varepsilon^2}{4}\delta^Q\left(1+Q^{-2}\right)T
\end{split}
\end{align}
for $\mathbf{A}(1/(d+1/(2d)))\leq\sigma\leq1$.

Let $Q\gg_{N,A}\left(Q_0+C_3(N)/\varepsilon^8\right)$, $\mathbf{A}(1/(d+1/(2d)))\leq\sigma\leq1$ and assume that there is no solution $\tau$ in $\left[T,2T\right]$ for the system of inequalities \eqref{SYST}.
Then, for every $\tau\in[T,2T]$,  there is a $k_\tau\in\left\{0,\dots,N\right\}$ such that
\begin{align*}
\sum\limits_{k=0}^N&\left|\zeta^{(k)}\left(\sigma+i\tau;\alpha\right)-\left.\dfrac{\partial^k}{\partial s^k}\zeta_Q\left(s+i\tau,\underline{0},\alpha\right)\right|_{s=\sigma}\right|^2\\
&\geq\left|\zeta^{(k_\tau)}\left(\sigma+i\tau;\alpha\right)-\left.\dfrac{\partial^{k_\tau}}{\partial s^{k_\tau}}\zeta_Q\left(s+i\tau,\underline{0},\alpha\right)\right|_{s=\sigma}\right|^2\\
&\geq\dfrac{1}{2}\left|\zeta^{(k_\tau)}\left(\sigma+i\tau;\alpha\right)-a_{k_\tau}\right|^2-\left|a_{k_\tau}-\left.\dfrac{\partial^{k_\tau}}{\partial s^{k_\tau}}\zeta_Q\left(s+i\tau,\underline{0},\alpha\right)\right|_{s=\sigma}\right|^2\\
&\geq\dfrac{\varepsilon^2}{2}-\dfrac{\varepsilon^2}{4}\\
&=\dfrac{\varepsilon^2}{4},
\end{align*}
as follows from \eqref{syst}.
However, this contradicts (\ref{last}). 

Now let \begin{align}\label{D}
\mathcal{U}_T(\alpha):=\left\{\tau\in\left[T,2T\right]:\Lambda_Q(\gamma_{Q}(\tau,\alpha)-\underline{\theta}_0)\neq0\right\}.
\end{align}
By definition $\Lambda_Q$ is bounded above by 1. 
This and (\ref{L_q}) imply that 
\begin{align}\label{meas1}
\mathrm{m}\left(\mathcal{U}_T(\alpha)\right)\geq\delta^Q\left(1-Q^{-2}\right)
T.
\end{align}
If $\mathcal{M}_T(\alpha,\sigma)$ is the set of those $\tau\in \mathcal{U}_T(\alpha)$ for which the system of inequalities
\begin{align*}
\left|\zeta^{(k)}\left(\sigma+i\tau;\alpha\right)-a_k\right|<\left(2\dfrac{Q^2+1}{Q^2-1}\right)^{1/2}\varepsilon,\,\,\,\,\,\,\,\,\,k=0,\dots, N,
\end{align*}
is satisfied, then relations (\ref{syst})-(\ref{meas1}) yield that
\begin{align*}
\mathrm{m}\left(\mathcal{M}_T(\alpha,\sigma)\right)\geq\dfrac{1}{2}\delta^Q\left(1-Q^{-2}\right)
T.
\end{align*}
For if that was not true, we would have
\begin{align*}
\sum\limits_{k=0}^N\left|\zeta^{(k)}\left(\sigma+i\tau;\alpha\right)-\left.\dfrac{\partial^k}{\partial s^k}\zeta_Q\left(s+i\tau,\underline{0},\alpha\right)\right|_{s=\sigma}\right|^2\geq\dfrac{\varepsilon^2}{2}\dfrac{Q^2+1}{Q^2-1}
\end{align*}
for every $\tau$ in the set of positive measure $\mathcal{U}_T(\alpha)\setminus \mathcal{M}_T(\alpha,\sigma)$.
 It would then follow from (\ref{syst}), (\ref{L_q}) and \eqref{meas1} that
\begin{align*}
&\int\limits_{T}^{2T}\Lambda_Q\left(\gamma_{Q}(\tau,\alpha)-\underline{\theta}_0\right)\sum\limits_{k=0}^N\left|\zeta^{(k)}\left(\sigma+i\tau;\alpha\right)-\left.\dfrac{\partial^k}{\partial s^k}\zeta_Q\left(s+i\tau,\underline{0},\alpha\right)\right|_{s=\sigma}\right|^2\mathrm{d}\tau\tag*{}\\
&\geq \frac{\varepsilon^2}{2}\dfrac{Q^2+1}{Q^2-1}\int_{\mathcal{U}_T(\alpha)\setminus \mathcal{M}_T(\alpha,\sigma)}\Lambda_Q\left(\gamma_{Q}(\tau,\alpha)-\underline{\theta}_0\right)\mathrm{d}\tau\\
&\geq\frac{\varepsilon^2}{2}\dfrac{Q^2+1}{Q^2-1}\left[\int_{\mathcal{U}_T(\alpha)}\Lambda_Q\left(\gamma_{Q}(\tau,\alpha)-\underline{\theta}_0\right)\mathrm{d}\tau-\mathrm{m}\left(\mathcal{M}_T(\alpha,\sigma)\right)\right]\\
&>\frac{\varepsilon^2}{4}\delta^Q\left(1+Q^{-2}\right)T,
\end{align*}
which contradicts (\ref{last}).
\end{proof}

\begin{proof}[Proof of Theorem \ref{weak.}]
Beginning with the Taylor series of $f$, 
$$
f(s)=\sum_{k=0}^\infty\frac{f^{(k)}(s_0)}{k!}(s-s_0)^k,
$$
valid for $s\in \mathcal{K}$, we observe, by Cauchy's formula
$$
f^{(k)}(s_0)={\frac{k!}{ 2\pi i}}\int_{\vert s-s_0\vert=r}{\frac{f(s)}{(s-s_0)^k}}\mathrm{d} s,
$$
that $\left\vert f^{(k)}(s_0)\right\vert \leq k!Mr^{-k}$, where $M:=\max_{\vert s-s_0\vert=r}\vert f(s)$. Fixing a number $\delta_0\in(0,1)$, we get
$$
\left\vert{\frac{f^{(k)}(s_0)}{ k!}}(s-s_0)^k\right\vert\leq M\delta_0^k
$$
for $\vert s-s_0\vert\leq \delta_0r$. 
If $\varepsilon\in(0,\vert f(s_0)\vert)$, we can find $N=N(\delta_0,\varepsilon,M)$ such that  
$$
\Sigma_1:=\left\vert f(s)-\sum_{k=0}^N{\frac{f^{(k)}(s_0)}{k!}}(s-s_0)^k\right\vert<\varepsilon,
$$
for $\vert s-s_0\vert\leq \delta_0r$.

Now let $\delta\in(0,\delta_0)$. 
Then, of course, the latter inequality holds in particular for $s$ satisfying $\vert s-s_0\vert\leq \delta r$.
 Now we apply Theorem \ref{weak tran} with $a_k=f^{(k)}(s_0)$,
 $k=0,\dots, N$.
 Then, for $\alpha\in\mathcal{A}(Q,M)\cap[A,1]$ of degree at most $d_0-1$,
and $T$ satisfying relation \eqref{BBB}, there exists $t_1\in[T,2T]$ such that
$$
\vert \zeta^{(k)}(\sigma_0+it_1;\alpha)-f^{(k)}(s_0)\vert<\varepsilon,\,\,\,k=0,\dots,N.
$$
Thus, 
\begin{eqnarray*}
\Sigma_2&:=&\left\vert \sum_{k=0}^{N}{\frac{\zeta^{(k)}(\sigma_0+it_1;\alpha)}{ k!}}(s-s_0)^k-\sum_{k=0}^N{\frac{f^{(k)}(s_0)}{ k!}}(s-s_0)^k\right\vert\\
&<& \varepsilon \sum_{k=0}^N{\frac{(\delta r)^k}{k!}}\\
&<&\varepsilon \exp(\delta r),
\end{eqnarray*}
for $\vert s-s_0\vert\leq \delta_0r$.
Now write $\tau=t_1-t_0$, then $1+it_1=s_0+i\tau$. 

Next we use the Taylor expansion for $\zeta(s;\alpha)$ on the shifted disk $\mathcal{K}+i\tau$.
 For this purpose we need to exclude the simple pole at $s=1$; 
 hence we also request $T>r$. 
 Under this assumption we have
$$
\zeta(s+i\tau;\alpha)=\sum\limits_{k=0}^{\infty}{\frac{\zeta^{(k)}(s_0+i\tau;\alpha)}{k!}}(s-s_0)^k
$$
for $s\in \mathcal{K}$. Let $M(\tau):=\max_{\vert s-s_0\vert=r}\vert \zeta(s+i\tau;\alpha)\vert$. Then, again by Cauchy's formula, 
$$
\left\vert {\frac{\zeta^{(k)}(s_0+i\tau;\alpha)}{k!}}(s-s_0)k\right\vert\leq M(\tau)\delta^k
$$
for $\vert s-s_0\vert\leq \delta_0r$.
Hence, 
\begin{eqnarray*}
\Sigma_3&:=&  \left\vert \zeta(s+i\tau;\alpha)-\sum_{k=0}^N{\frac{\zeta^{(k)}(s_0+i\tau;\alpha)}{k!}}(s-s_0)^k\right\vert\\
&=&\left\vert \sum_{k>N}{\frac{\zeta^{(k)}(s_0+i\tau;\alpha)}{k!}}(s-s_0)^k\right\vert  \\
&\leq & M(\tau){\frac{\delta^N} {1-\delta}},
\end{eqnarray*}
for $\vert s-s_0\vert\leq \delta_0r$.
In combination with the above estimates this yields
\begin{eqnarray*}
\vert \zeta(s+i\tau;\alpha)-f(s)\vert &\leq & \Sigma_1+\Sigma_2+\Sigma_3< \varepsilon +\varepsilon \exp(\delta r)+M(\tau){\frac{\delta^N}{ 1-\delta}} 
\end{eqnarray*}
for $\vert s-s_0\vert\leq \delta r$. Now we choose $\delta>0$ such that $M(\tau){\frac{\delta^N}{1-\delta}}=\varepsilon(2-\exp(\delta r))$; this choice is possible since the left hand side tends to zero as $\delta\to 0$ while the right hand side tends to $\varepsilon>0$, resp. the left hand side tends to infinity but the right hand side remains bounded as $\delta\to 1$. This proves the theorem.  
\end{proof}

\section{Concluding Remarks}

We begin with a historical note. 
Hurwitz \cite{hurwitz1882einige} himself treated only Hurwitz zeta-functions with a rational parameter.
 In his investigations on Dirichlet's analytic class number formula, he studied Dirichlet series of the form 
$$
\sum_{n\equiv a\bmod\, m}n^{-s}
$$
which can be rewritten as $m^{-s}\zeta(s;{\frac{a}{ m}})$. 
It appears that switching from a rational to an irrational parameter does not affect analytic continuation, functional identities and the order of growth, however, the zero-distribution definitely depends and the general distribution of values might depend  on the diophantine nature of the parameter. 
Further generalizations of the Riemann zeta-function and Dirichlet $L$-functions were in Hurwitz's time also studied by Lerch and Lipschitz.
 For details we refer to the monograph \cite{zbMATH01924169} by Laurin\v cikas and Garunk\v stis on the Lerch zeta-function (which covers the case of Hurwitz's zeta). 

Our next remark shall classify the various proofs of universality properties for the Hurwitz zeta-function $\zeta(s;\alpha)$ in the literature so far, namely the results of Bagchi \cite{bagchi1981statistical} and Gonek \cite{gonek1979analytic}. 
For rational $\alpha$ there is a representation of $\zeta(s;\alpha)$ in terms of Dirichlet $L$-functions with pairwise inequivalent characters which allows to apply a joint universality theorem for those due to Voronin \cite{voronin1975functional} in order to deduce the desired approximation property; for $\alpha\neq {\frac{1}{ 2}},1$, the target function may even vanish (whereas for $\alpha={\frac{1}{ 2}}$ or $1$ the Hurwitz zeta-function is essentially equal to a Dirichlet $L$-function and Riemann's $\zeta$, respectively, and too many zeros off the critical line would contradict classical density theorems). 
If the parameter $\alpha$ is transcendental, one can mimic Voronin's proof using the linear independence of the numbers $\log(n+\alpha)$ for non-negative integers $n$ (in place of the logarithms of the prime numbers in case of $\zeta$).
 In all these results the approximating shifts form a set of positive lower density (as in the universality theorem for the Riemann zeta-function).   
 
 Although our results are far from being satisfactory in comparison with the case of transcendental parameter, they shed light to another major topic in universality theory, that of effective lower bounds for the lower density of the set of approximating shifts as well as estimating explicitly $T$ such that an approximating shift $\tau$ lies in $[T,2T]$. 
 It is evident from the proof of Theorem \ref{MAIN3} that if we have an estimate of the form
 \begin{align}\label{Tran}
 |P(\alpha)|\geq D H^C,
 \end{align}
 for transcendental $\alpha$ and integral polynomial $P$, where $D$ and $C$ may depend on the degree of $P$ but not in its height $H$, then we can obtain the same lower bound for $$\liminf\limits_{T\to\infty}\dfrac{1}{T}\mathcal{M}_T(\alpha,\sigma)$$ as in Theorem \ref{weak tran}, whenever $\sigma$ is sufficiently close to $1$. If in addition $D$ and $C$ are effectively computable, then we can estimate explicitly a $T$ such that $\tau\in[T,2T]$. 
 In direction of \eqref{Tran} we refer to the monograph of Bugeaud \cite{bugeaud2004approximation}, where the classification of transcendental numbers into $S$-, $T$- and $U$-numbers is given in detail.
\bibliography{bib1}{}
\bibliographystyle{siam-new}
\small
Athanasios Sourmelidis, J\"orn Steuding\\
Institute for Mathematics, W\" urzburg University, \\
Emil-Fischer Str. 40, 97074 W\"urzburg, Germany\\
 athanasios.sourmelidis@mathematik.uni-wuerzburg.de\\
steuding@mathematik.uni-wuerzburg.de
\end{document}